\documentclass[11pt]{article}


%

\usepackage[a4paper]{geometry}
\usepackage
[
colorlinks=true,
linkcolor=blue,
anchorcolor=blue,
citecolor=blue,
urlcolor=blue,
plainpages=false,
pdfpagelabels,
breaklinks=true
]{hyperref}
\usepackage[english]{babel}
\usepackage[utf8]{inputenc}
\usepackage{amsmath,amssymb,amsthm,thmtools}
\usepackage{newtxtext,newtxmath}
\usepackage{graphicx}
\usepackage[colorinlistoftodos,textsize=scriptsize]{todonotes}
\usepackage{fullpage}
\usepackage[margin=0.5in]{caption}
\usepackage{url, inconsolata}
\usepackage{tikz}
\usetikzlibrary{calc}
\usetikzlibrary{decorations.pathreplacing}
\usepackage[capitalize]{cleveref}
\usepackage{xspace}
\usepackage{enumerate}
\usepackage[numbers,sort,compress]{natbib}
\usepackage{mathtools}
\usepackage{algorithm,algpseudocode}
\usepackage{tabularx}
\usepackage{listings}
\lstset { %
     language=C++,
     basicstyle=\ttfamily,
     columns=fullflexible,
     tabsize=4
}
\usepackage{nicematrix}
\NiceMatrixOptions{
code-for-first-row = \scriptstyle,
code-for-last-row = \scriptstyle,
code-for-first-col = \scriptstyle,
code-for-last-col = \scriptstyle 
}

\DeclareMathOperator{\pivots}{Pivots}
\DeclareMathOperator{\Rips}{Rips}
\DeclareMathOperator{\diam}{diam}

\newcommand\op[1]{\mathop{\operatorname{#1}}\nolimits}
\newcommand\cat[1]{\ensuremath{\mathbf{#1}}}

\newcommand\colpivot[2]{\op{Pivot}{#1_#2}}

\newcommand\colpivotentry[2]{\op{PivotEntry}{#1_#2}}

\newtheorem{theorem}{Theorem}[section]
\newtheorem*{theorem*}{Theorem}
\newtheorem{proposition}[theorem]{Proposition}
\newtheorem{lemma}[theorem]{Lemma}

\theoremstyle{definition}
\newtheorem{definition}[theorem]{Definition}
\newtheorem{remark}[theorem]{Remark}

\title{Ripser: efficient computation of Vietoris–Rips~persistence~barcodes}
\author{Ulrich Bauer\thanks{Technical University of Munich (TUM), Germany.%
}}
\begin{document}
\maketitle

\begin{abstract}
We present an algorithm for the computation of Vietoris–Rips persistence barcodes and describe its implementation in the software Ripser.
The method relies on implicit representations of the coboundary operator and the filtration order of the simplices, avoiding the explicit construction and storage of the filtration coboundary matrix.
Moreover, it makes use of apparent pairs, a simple but powerful method for constructing a discrete gradient field from a total order on the simplices of a simplicial complex, which is also of independent interest.
Our implementation shows substantial improvements over previous software both in time and memory usage.
\end{abstract}

\section{Introduction}
Persistent homology %
is a central tool in computational topology and topological data analysis.
It captures topological features of a \emph{filtration}, a growing one-parameter family of topological spaces, and tracks the lifespan of those features throughout the parameter range in the form of a collection of intervals called the \emph{persistence barcode}.
One of the most common constructions for a filtration from a geometric data set is the \emph{Vietoris–Rips} complex, which is constructed from a finite metric space by connecting any subset of the points with diameter bounded by a specified threshold with a simplex.

The computation of persistent homology has attracted strong interest in recent years \cite{MR3003900,MR2919613}, with at least 15 different implementations publicly available to date \cite{Plex,JPlex,JavaPlex,Dionysus,dionysus2,
PHom,Perseus,Bauer2017Phat,Bauer2014Distributed,GUDHI,CTL,Binchi2014JHoles,Libstick,Henselman2016Matroid,Zhang2019Hypha,zhang_et_al:LIPIcs:2020:12228,Cufar2020}.
Over the years, dramatic improvements in performance have been achieved, as demonstrated in recent benchmarks \cite{Otter2017Roadmap}.

The predominant approach to persistence computation consist of two steps: the construction of a filtration boundary matrix, and the computation of persistence barcodes using a matrix reduction algorithm similar to Gaussian elimination, which provides a decomposition of the filtered chain complex into indecomposable summands \cite{MR1310596}.
Among the fastest codes for the matrix reduction step is PHAT~\cite{Bauer2017Phat}, which has been created with the goal of assessing and understanding the relation and interplay of the various optimizations proposed in the previous literature on the matrix reduction algorithm.
In the course of that project, it became evident that often the construction of the filtration boundary matrix becomes the bottleneck for the computation of Vietoris–Rips barcodes.

The approach followed in Ripser \cite{Ripser} is to avoid the construction and storage of the filtration boundary matrix as a whole, discarding and recomputing parts of it when necessary.
In particular, 
instead of representing the coboundary map explicitly by a matrix data structure, it is given only algorithmically, recomputing the coboundary of a simplex whenever needed.
The filtration itself is also not specified explicitly but only algorithmically, via a method for comparing simplices with respect to their appearance in the filtration order, together with a method for computing the cofacets of a given simplex and their diameters.
The initial motivation for pursuing this strategy was purely to reduce the memory usage, possibly at the expense of an increased running time.
Perhaps surprisingly, however, this approach also turned out to be substantially faster than accessing the coboundary from memory.
This effect can be explained by the fact that, on current computer architectures, memory access is much more expensive than elementary arithmetic operations.

The computation of persistent homology as implemented in Ripser involves four key optimizations to the matrix reduction algorithm, two of which have been proposed in the literature before.
While our implementation is specific to Vietoris--Rips filtrations, the ideas are also applicable to persistence computations for other filtrations as well.

\paragraph{Clearing birth columns}
The standard matrix reduction algorithm does not make use of the special structure of a boundary matrix $D$, which satisfies $D^2=0$, i.e., boundaries are always cycles.
Ignoring this structure leads to a large number of unnecessary and expensive matrix operations in the matrix reduction, computing a large number of cycles that are not used subsequently.
The \emph{clearing} optimization (also called \emph{twist}), suggested by \citet{Chen2011Persistent}, avoids the computation of those cycles.

\paragraph{Cohomology}
The use of cohomology for persistence computation was first suggested by \citet{MR2854319}.
The authors establish certain dualities between persistent homology and cohomology and between absolute and relative persistent cohomology.
As a consequence, the computation of persistence barcodes can also be achieved as a cohomology computation.
A surprising observation, resulting from an application of persistent cohomology to the computation of circular coordinates by \citet{MR2787567}, was that the computation of persistent cohomology is often much faster than persistent homology.
This effect has been subsequently confirmed by \citet{Bauer2017Phat}, who further observed that the obtained speedup also depends heavily on the use of the clearing optimization proposed by \citet{Chen2011Persistent}, which is also employed implicitly in the cohomology algorithm of \cite{MR2854319}.
Especially for Vietoris--Rips filtrations and low homological degree, a decisive speedup is obtained, but only when both cohomology and clearing are used in conjunction.
A fully satisfactory explanation of this phenomenon has not been given previously in the literature.
In the present paper, we provide a simple counting argument that sheds light on this computational asymmetry between persistent homology and cohomology of Rips filtrations.

\paragraph{Implicit matrix reduction}
The computation of persistent homology usually relies on an explicit construction of a filtration coboundary matrix, which is then transformed to a reduced form, from which the persistence barcode can be read off directly.
In contrast, our approach is to decouple the description of the filtration and of the boundary operator, representing both the filtration and the coboundary matrix only algorithmically instead of explicitly.
Specifically, using a fixed lexicographic order for the $k$-simplices, independent of the filtration, 
the boundary and coboundary matrices for a full simplex on $n$ vertices are completely determined by the dimension $k$ and the number $n$, and their columns can simply be recomputed instead of being stored in memory.
Likewise, the filtration order of the simplices is defined to depend only on the distance matrix together with a fixed choice of total order on the simplices, used to break ties when two simplices appear simultaneously in the filtration.
Together, the filtration and the boundary map can be encoded using much less information than storing the coboundary matrix explicitly.
The algorithmic representation of the coboundary matrix in Ripser loosely resembles the use of lazy evaluation in the infinite-dimensional linear algebra framework of \citet{10.1109/HPTCDL.2014.10}.

Furthermore, we also avoid the storage of the reduced matrix as a whole, retaining only the much smaller \emph{reduction matrix}, which encodes the column operations applied to the coboundary matrix.
Besides the current column of the reduced matrix on which operations are performed, only information about the pivots of the reduced matrix is stored in memory.
In addition, only the pivots which can not be obtained directly from the unreduced matrix are stored in memory, as explained next.
The implicit representation of the reduced matrix by a reduction matrix has also been used in the cohomology algorithm by \citet{MR2854319}, which is implemented in \cite{GUDHI,Dionysus}.
In contrast to our implementation, however, in those implementations the unreduced filtration coboundary matrix is still stored explicitly.

\paragraph{Apparent and emergent pairs}
Further improvements to persistence computation can be obtained by expliting a certain easily identified type of persistence pair, called an \emph{apparent pair}, is encountered.
The pairing of a given simplex in an apparent pair can be determined by a purely local condition, depending only on the facets and cofacets of the simplex, and thus can be read off the filtration (co)boundary matrix directly without any matrix reduction.
In addition, since an apparent pair determines pivots in the boundary and the coboundary matrix, those pivots can be recomputed quickly, and thus only the pivots not corresponding to apparent pairs have to be stored in memory for the implicit matrix reduction algorithm.

Generalizing the notion of apparent pairs, \emph{emergent pairs} are persistence pairs that become apparent during the reduction, and can be read off the partially reduced matrix directly.
The construction of the filtration coboundary matrix columns can be cut short when an apparent or emergent pair is encountered.
During the enumeration of cofacets of a simplex for an appropriate refinement of the original filtration,
apparent and emergent pairs of persistence $0$ can be readily identified, circumventing the construction of the full coboundary of the simplex.
Since a large portion of all pairs appearing in the computation arises this way, it becomes unnecessary to construct the entire filtration (co)boundary matrix, and the speedup obtained from this shortcut is substantial.
Apparent pairs also provide a simple and natural construction for a discrete gradient (in the sense of discrete Morse theory) from a simplexwise filtration.

We note that special cases of the apparent pairs construction have been described in the literature before, and several equivalent variants have appeared in the literature, seemingly independently from the present work, after the public release of Ripser.
In particular, \citet{Kahle2011Random} described the construction of a discrete gradient on a simplicial complex based on a total order of the vertices, which is used to derive bounds on the topological complexity of random Vietoris--Rips complexes above the thermodynamic limit.
Indeed, our definition of apparent pairs arose from the goal of generalizing Kahle's construction to general filtrations of simplicial complexes.
We verify in \cref{lem: Kahle Gradient} that the discrete gradient constructed in that paper coincides with the apparent pairs of a simplexwise filtration given by the lexicographic order on the simplices.
Apparent pairs were also considered by \citet{Delgado2015Skeletonization}
as \emph{close pairs} in the context of cancelation of critical points in discrete Morse functions.
More recently, apparent pairs have been described by \citet[Remark 8.4.2]{HenselmanPetrusek2017Matroids} as \emph{minimal pairs of a linear order} and employed in the software Eirene \cite{Henselman2016Matroid}, which has been developed simultaneously and independently of Ripser.
In Eirene, apparent pairs are used to improve the performance of persistence computations by avoiding the construction of large parts of the boundary matrix, similar to the use of apparent and emergent pairs in Ripser.
An elaborate focus lies on the choice of refinement of the Vietoris--Rips filtration, aiming for a large number of pairs.
Following the first release of Ripser in 2016, apparent pairs have been studied by \citet{lampret2020chain} in the more general context of algebraic discrete Morse theory under the name \emph{steepness pairing}.
In a computational context, they have also been employed for parallel and multi-scale (coarse-to fine) persistence computation on the GPU by \citet[Definition 4]{Mendoza2017Parallel}, and in hybrid GPU/CPU variantd of PHAT and Ripser developed by \citet{Zhang2019Hypha,zhang_et_al:LIPIcs:2020:12228}.
Furthermore, a reimplementation of Ripser in Julia has been developed by \citet{Cufar2020}, and a lockfree shared-memory adaptation of Ripser has been created by \citet{10.1145/3350755.3400244}.

While apparent pairs have not been considered a central part of discrete Morse theory and of persistent homology so far, we consider their importance in recent research and their multiple discovery as strong evidence that they will play a significant role in the further development of these theories.

\section{Preliminaries}
\label{sec: Background}

\paragraph{Simplicial complexes and filtrations}

Given a finite set $X$, an (abstract) \emph{simplex} on $X$ is simply a nonempty subset $\sigma \subseteq X$.
The \emph{dimension} of $\sigma$ is one less than its cardinality, $\dim \sigma = |\sigma| - 1$.
Given two simplices $\sigma \subseteq \tau$, we say that $\sigma$ is a \emph{face} of $\tau$, and that $\tau$ is a \emph{coface} of $\sigma$.
If additionally $\dim \sigma + 1 = \dim \tau$, we say that $\sigma$ is a \emph{facet} of $\tau$ (a face of codimension $1$), and that $\tau$ is a \emph{cofacet} of $\sigma$.

A finite (abstract) \emph{simplicial complex} is a collection $K$ of simplices $X$ that is closed under the face relation: if $\tau \in K$ and $\sigma \subseteq \tau$, then $\sigma \in K$.
The set $X$ is called the \emph{vertices} of $K$, and the subsets in $K$ are called \emph{simplices}.
A \emph{subcomplex} of $K$ is a subset $L \subseteq K$ that is itself a simplicial complex.

Given a finite simplicial complex $K$, a \emph{filtration} of $K$ is a collection of subcomplexes $(K_i)_{i \in I}$ of $K$, where $I$ is a totally ordered indexing set, such that $i \leq j$ implies $K_i \subseteq K_j$.
In particular, for a finite metric space $(X,d)$,  represented by a symmetric distance matrix, the \emph{Vietoris--Rips complex} at scale $t \in \mathbb R$ is the abstract simplicial complex
\[\Rips_t(X) = \{\emptyset \neq S \subseteq X \mid \diam S \leq t \}.\]
Vietoris--Rips complex were first introduced by \citet{MR1512371} as a means of defining a homology theory for general compact metric spaces, and later used by Rips in the study of hyperbolic groups (see \cite{MR919829}).
Their usage in topological data analysis was pioneered by \citet{SPBG:SPBG04:157-166}, foreshadowed by results of \citet{MR1368659} and \citet{MR1879057} on sampling conditions for recovering the homotopy type of a Riemannian manifold from a Vietoris--Rips complex.
Letting the scale parameter $t$ vary, the resulting filtration, indexed by $I=\mathbb R$, is a filtration of the full simplex $\Delta(X)$, called the \emph{Vietoris--Rips filtration}.
For this paper, other relevant indexing sets besides the real numbers $\mathbb R$ are the set of distances $\{d(x,y) \mid x, y \in X\}$ in a finite metric space $(X,d)$, and the set of simplices of $\Delta(X)$ equipped with an appropriate total order refining the order by simplex diameter, as explained later.

We call a filtration \emph{essential} if $i \neq j$ implies $K_i \neq K_j$.
A \emph{simplexwise filtration} of $K$ is a filtration such that for all $i \in I$ with $K_i \neq \emptyset$, there is some simplex $\sigma_i \in K$ and some index $j < i \in I$ such that $K_i \setminus K_j = \{\sigma_i\}$.
In an essential simplexwise filtration, the index $j$ is the predecessor of $i$ in $I$.
Thus, essential simplexwise filtrations correspond bijectively to total orders extending the face poset of $K$, up to isomorphism of the indexing set $I$.
In particular, in this case we often identify the indexing set with the set of simplices.
If a simplex $\sigma$ appears earlier in the filtration than another simplex~$\tau$, i.e., $\sigma \in K_i$ whenever $\tau \in K_i$, we say that $\tau$ is \emph{younger} than $\sigma$, and $\sigma$ is \emph{older} than $\tau$.

It is often convenient to think of a simplicial filtration as a diagram $K_\bullet \colon I \to \cat{Simp}$ of simplicial complexes indexed over some finite totally ordered set $I$,
such that all maps $K_i \to K_j$ in the diagram (with $i \leq j$) are inclusions.
In terms of category theory, $K_\bullet$ is a functor.

\paragraph{Reindexing and refinement of filtrations}

A \emph{reindexing} of a filtration $F_\bullet \colon R \to \cat{Simp}$ indexed over some totally ordered set $R$ is another filtration $K_\bullet \colon I \to \cat{Simp}$ such that $F_t = K_{r(t)}$ for some monotonic map $r \colon R \to I$, called \emph{reindexing map}.
If there is a complex $K_i$ that does not occur in the filtration $F_\bullet$, we say that $K_\bullet$ \emph{refines} $F_\bullet$.

As an example, the filtration $\Rips_\bullet(X)$ is indexed by the real numbers~$\mathbb R$, but can be condensed to an essential filtration $K_\bullet$, %
indexed by the finite set of pairwise distances of $X$.
In order to compute persistent homology, one needs to apply one further step of reindexing, refining the filtration to an essential simplexwise one, as described in detail later.

\paragraph{Sublevel sets of functions}
A function $f \colon K \to \mathbb R$ on a simplicial complex $K$ is \emph{monotonic} if $\sigma \subseteq \tau \in K$ implies $f(\sigma) \leq f(\tau)$.
For any $t \in \mathbb R$, the \emph{sublevel set} $f^{-1}(-\infty,t]$ of a monotonic function $f$ is a subcomplex.
The sublevel sets form a filtration of $K$ indexed over $\mathbb R$.
Clearly, any finite filtration $K_\bullet \colon I \to \cat{Simp}$ of simplicial complexes can be obtained as a reduction of some sublevel set filtration.
In particular, the Vietoris--Rips filtration is simply the sublevel set filtration of the diameter function.

Discrete Morse theory \cite{Forman1998Morse} studies the topology of sublevel sets for generic functions on simplicial complexes.
A \emph{discrete vector field} on a simplicial complex $K$ is a partition $V$ of $K$
into singleton sets and pairs $\{\sigma, \tau\}$ in which $\sigma$
is a facet of~$\tau$.
We call such a pair a \emph{facet pair}.
A monotonic function $f \colon K \to \mathbb R$ is a \emph{discrete Morse function} if the facet pairs $\{\sigma,\tau\}$ with $f(\sigma)=f(\tau)$ generate a discrete vector field~$V$, which is then called the \emph{discrete gradient} of $f$.
A simplex that is not contained in any pair of $V$ is called a
\emph{critical simplex}, and the corresponding value is a
\emph{critical value} of~$f$.

\paragraph{Persistent homology}
In this paper, we only consider simplicial homology with coefficients in a prime field $\mathbb F_p$, and write $H_*(K)$ as a shortcut for $H_*(K; \mathbb F_p)$.
Applying homology
to a filtration of finite simplicial complexes $K_\bullet \colon I \to \cat{Simp}$ yields another diagram $H_*(K_\bullet) \colon I \to \cat{Vect}_p$ of finite dimensional vector spaces over $\mathbb F_p$, often called a \emph{persistence module} \cite{Chazal2016Structure}.

If all vector spaces have finite dimension, 
such diagrams have a particularly simple structure: they decompose into a direct sum of \emph{interval} persistence modules, consisting of copies of the field $\mathbb F_p$ connected by the identity map over an interval range of indices, and the trivial vector space outside the interval \cite{Zomorodian2005Computing,CrawleyBoevey2015Decomposition}.
This decomposition is unique up to isomorphism, and the collection of intervals describing the structure, the \emph{persistence barcode}, is therefore a complete invariant of the isomorphism type, capturing the homology at each index of the filtration together with the maps connecting any two different indices.
In fact, a corresponding decomposition exists already on the level of filtered chain complexes \cite{MR1310596}, and this decomposition is constructed by algorithms for computing persistence barcodes.

If $K_\bullet$ is an essential filtration and $[i,j) \subseteq I$ is an interval in the persistence barcode of $K_\bullet$, then we call $i$ a \emph{birth index}, $j$ a \emph{death index}, and the pair $(i,j)$ an \emph{index persistence pair}.
Moreover, if $[i,\infty)$ is an interval in the persistence barcode of $K$, we say that $i$ is an \emph{essential (birth) index}.
For an essential simplexwise filtration $K_\bullet$, the indies $I$ are in bijection with the simplices, and so in this context we also speak about \emph{birth, death}, and \emph{essential simplices}, and we consider pairs of simplices as persistence pairs.
If $K_\bullet$ is a reindexing of a sublevel set filtration for a monotonic function $f$, we say that the pair $(\sigma_i,\sigma_j)$ has \emph{persistence} $f(\sigma_j)-f(\sigma_i)$.

\paragraph{Persistence computation using simplexwise refinement}
A reindexing $K_\bullet$ of a filtration $F_\bullet = K_\bullet \circ r$ can be used to obtain the persistent homology of $F_\bullet$ from that of $K_\bullet$ as \[H_*(F_\bullet) = H_*(K_\bullet \circ r) = H_*(K_\bullet) \circ r.\]
Note that this is a direct consequence of the fact that the two filtrations $F_\bullet,K_\bullet$, the reindexing map~$r$, and homology~$H_*$ are functors, and composition of functors is associative.

If the reindexing map is not surjective, the persistence barcode of the reindexed filtration $K_\bullet$ may contain intervals that do not correspond to intervals in the barcode of $F_\bullet$.
The preimage $r^{-1}[i,j) \subseteq R$ of an interval $[i,j) \subseteq I$ in the persistence barcode of $K_\bullet$ is then either empty, in which case we call $(i,j)$ a \emph{zero} persistence pair;
if $F_\bullet$ is the sublevel set filtration of $f$, this is the case if and only if $f(\sigma_j)=f(\sigma_i)$.
Otherwise, $r^{-1}[i,j)$ is an interval of the persistence barcode for $F_\bullet$, and all such intervals arise this way.
We summarize:

\begin{proposition}
Let $f \colon K \to \mathbb R$ be a monotonic function on a simplicial complex $K$, and let $K_\bullet \colon I \to \cat{Simp}$ be an essential simplexwise refinement of the sublevel set filtration $F_\bullet = f^{-1}(-\infty,\bullet]$, with $K_i = \{\sigma_k \mid k \in I, \, k \leq i\}$.
The~persistence barcode of $K_\bullet$ determines the persistence barcode of $F_\bullet$,
\[B(H_*(F_\bullet)) = %
\left\{r^{-1}[i,j) \neq \emptyset \mid [i,j) \in B(H_*(K_\bullet)) \right\},\]
with $r^{-1}[i,j) = [f(\sigma_i),f(\sigma_j))$ and $r^{-1}[i,\infty) = [f(\sigma_i),\infty)$.
\end{proposition}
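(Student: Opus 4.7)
The plan is to exploit functoriality throughout: everything in sight, including the reindexing, composes, so the barcode calculation reduces to pulling back interval modules. First I would recall that $F_\bullet = K_\bullet \circ r$ by definition of reindexing, and apply the homology functor to obtain $H_*(F_\bullet) = H_*(K_\bullet) \circ r$ as persistence modules.

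Next I would invoke the structure theorem for pointwise finite-dimensional persistence modules (cited in the excerpt as \cite{Zomorodian2005Computing,CrawleyBoevey2015Decomposition}) to decompose
\[H_*(K_\bullet) \cong \bigoplus_{[i,j) \in B(H_*(K_\bullet))} \mathbb{I}_{[i,j)},\]
where $\mathbb{I}_{[i,j)}$ denotes the interval module supported on $[i,j) \subseteq I$. Since precomposition with the monotonic map $r$ is an exact functor that commutes with direct sums, this yields
\[H_*(F_\bullet) \cong \bigoplus_{[i,j) \in B(H_*(K_\bullet))} \bigl(\mathbb{I}_{[i,j)} \circ r\bigr).\]
A quick check shows that $\mathbb{I}_{[i,j)} \circ r = \mathbb{I}_{r^{-1}[i,j)}$: the value at $t \in \mathbb R$ is $\mathbb F_p$ exactly when $r(t) \in [i,j)$, and the comparison maps remain identities. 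Because $r$ is monotonic, each preimage $r^{-1}[i,j)$ is either empty or an interval in $\mathbb R$, so the right hand side is already an interval decomposition of $H_*(F_\bullet)$, and the empty summands contribute nothing. By uniqueness of interval decompositions, $B(H_*(F_\bullet))$ is exactly the multiset of the nonempty preimages, which is the displayed formula.

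It remains to identify $r^{-1}[i,j)$ explicitly. The reindexing map sends $t$ to the unique index with $F_t = K_{r(t)}$, so $r(t) \geq i$ is equivalent to $\sigma_i \in F_t$, i.e.\ to $f(\sigma_i) \leq t$; similarly $r(t) < j$ is equivalent to $\sigma_j \notin F_t$, i.e.\ to $t < f(\sigma_j)$. Hence $r^{-1}[i,j) = [f(\sigma_i), f(\sigma_j))$, which is empty precisely when $f(\sigma_i) = f(\sigma_j)$ (the zero-persistence pairs mentioned in the preceding discussion), and the analogous computation gives $r^{-1}[i,\infty) = [f(\sigma_i),\infty)$.

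The only substantive point that needs care is that the decomposition genuinely commutes with the reindexing, i.e.\ that $\mathbb{I}_{[i,j)} \circ r$ is itself an interval module so that we are not smuggling in a further decomposition step when we pass from $K_\bullet$ to $F_\bullet$. This is where monotonicity of $r$ is essential: it ensures connectedness of the preimage, and hence that the pullback of an interval module is again an interval module (or zero). Beyond this observation the argument is essentially bookkeeping with functors and the structure theorem.
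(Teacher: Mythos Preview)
Your argument is correct and follows the same approach as the paper: the proposition is stated there as a summary of the preceding discussion, which derives $H_*(F_\bullet) = H_*(K_\bullet) \circ r$ from functoriality and then observes that each barcode interval pulls back to its preimage under $r$, which is either empty or an interval. You have simply spelled out the interval-module pullback and the identification $r^{-1}[i,j) = [f(\sigma_i), f(\sigma_j))$ more carefully than the paper does, but there is no substantive difference in strategy.
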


\paragraph{Filtration boundary matrices}
Given a simplicial complex $K$ with a totally ordered set of vertices $X$, there is a canonical basis of the simplicial chain complex $C_*(K)$, consisting of the simplices oriented according to the specified total order.
A simplexwise filtration turns this into an ordered basis and gives rise to a \emph{filtration boundary matrix}, which is the matrix of the boundary operator of the chain complex $C_*(K)$ with respect to that ordered basis.
We may consider boundary matrices both for the combined boundary map $\partial_* \colon C_* \to C_*$ as well as for the individual boundary maps $\partial_d \colon C_d \to C_{d-1}$ in each dimension~$d$.
Generalizing the latter case, we say that a matrix $D$ with column indices $I_d \subset I$ and row indices $I_{d-1} \subset I$ is a \emph{filtration $d$-boundary matrix} for a simplexwise filtration $K_\bullet \colon I \to \cat{Simp}$ if for each $i \in I$, the columns of $D$ with indices $\leq i$ form a generating set of the $(d-1)$-boundaries $B_{d-1}(K_i)$.
This allows us to remove columns from a boundary matrix that are linear combinations of the previous columns, a strategy called \emph{clearing} that is discussed in \cref{sec: Clearing columns}.

\paragraph{Indexing simplices in the combinatorial number system}
\label{sec: combinatorial numbering system}
We now describe the \emph{combinatorial number system} \cite{Pascal1887,Knuth2011Generating}, which provides a way of indexing the simplices of the full simplex $\Delta(X)$ and of the Vietoris--Rips filtration $\Rips_\bullet(X)$ by natural numbers, and which has previously been employed for persistence computation in \cite{Bauer2014Distributed}.
Again, we assume a total order on the vertices $X=\{v_0,\dots,v_{n-1}\}$ of the filtration.
Using this order, we identify each $d$-simplex $\sigma$ with the sorted $(d+1)$-tuple of its vertex indices $(i_{d},\dots,i_0)$ in decreasing order $i_d > \dots > i_0$.
This induces a lexicographic order on the set of $d$-simplices, which we refer to as the \emph{colexicographic vertex order}.
The \emph{combinatorial number system} of order $d+1$ is the order-preserving bijection
\[(i_d,\dots,i_0) \mapsto \sum_{l=0}^{d}{i_l \choose {l+1}}\]
mapping the lexicographically ordered set of decreasing $(d+1)$-tuples of natural numbers
to the set of natural numbers $\{0,\dots,\binom{n}{d+1} - 1\}$,
as illustrated in the following value table for $d=2$.
\[
\begin{array}{c|c|c|c|c|c|c}
(2,1,0)&(3,1,0)&(3,2,0)&(3,2,1)&(4,1,0)&\dots& (n-3,n-2,n-1)\\
\hline
0 & 1 & 2 & 3 & 4 & \cdots & \binom{n}{d+1} - 1
\end{array}
\]
Note that for $k > n$ the convention ${n \choose k} = 0$ is used here.
As an example, the simplex $\{v_5,v_3,v_0\}$ is assigned the number 
\[\textstyle(5,3,0) \mapsto {5 \choose 3} + {3 \choose 2} + {0 \choose 1} = 10 + 3 + 0 = 13.\]
Conversely, if a $d$-simplex $\sigma$ with vertex indices $(i_d,\dots,i_0)$ has index $N$ in the combinatorial number system, the vertices of $\sigma$ can be obtained by a binary search, as described in \cref{sec:vertices_simplices}.

\paragraph{Lexicographic refinement of the Vietoris–Rips filtration}
\label{sec: lexicographic refinement}
We now describe an essential simplexwise refinement of the Vietoris--Rips filtration, as required for the computation of persistent homology.
To this end, we consider another lexicographic order on the simplices of the full simplex $\Delta(X)$ with vertex set $X$, 
given by ordering the simplices
\begin{itemize}
\item by diameter, 
\item then by dimension, 
\item then by reverse colexicographic vertex order.
\end{itemize}
We will refer to the simplexwise filtration resulting from this total order as the \emph{lexicographically refined Vietoris--Rips filtration}.
The choice of the reverse colexicographic vertex order has algorithmic advantages, explained in \cref{sec: computing cofacets}.

As an example, consider the point set 
$X = \{v_0=(0,0), v_1=(3,0), v_2=(0,4), v_3=(3,4)\} \subseteq \mathbb R^2$,
consisting of the vertices of a $3\times 4$ rectangle
with the Euclidean distance. 
We obtain the distance matrix
\[
\begin{pNiceMatrix}[first-row,last-col]
v_0 & v_1 & v_2 & v_3 &\\
0 & 3 & 4 & 5 & ~~v_0\\
3 & 0 & 5 & 4 & ~~v_1\\
4 & 5 & 0 & 3 & ~~v_2\\
5 & 4 & 3 & 0 & ~~v_3
\end{pNiceMatrix}
\]
and the table of simplices (top row) with their diameters (bottom row)
\[\arraycolsep=0.6ex%
\small
\begin{array}
{c|c|c|c|c|c|c|c|c|c|c|c|c|c|c}
(3) & (2) & (1) & (0) & (3,2) & (1,0) & (3,1) & (2,0) & (3,0) & (2,1) & (3,2,1) & (3,2,0) & (3,1,0) & (2,1,0) & (3,2,1,0)\\
\hline
0 & 0 & 0 & 0& 
3 & 3 & 4 & 4 & 5 & 5 & 
5 & 5 & 5 & 5 & 
5
\end{array}
\]
listed in order of the lexicographically refined Vietoris--Rips filtration.

\section{Computation}

In this section, we explain the algorithm for computing persistent homology implemented in Ripser, and discuss the various optimization employed to achieve an efficient implementation.

\subsection{Matrix reduction}

The prevalent approach to computing persistent homology is by column reduction \cite{CohenSteiner2006Vines} of the filtration boundary matrix.
We write $M_i$ to denote the $i$th column of a matrix $M$.
The \emph{pivot index} of $M_i$, denoted by $\colpivot M i$, is the largest row index of any nonzero entry, taken to be $0$ if all entries of $v$ are~$0$.
Otherwise, the corresponding nonzero entry is called the \emph{pivot entry}, denoted by $\colpivotentry M i$.
We define $\pivots M =\bigcup_i\colpivot M i \setminus \{0\}$.

A column $M_i$ is called \emph{reduced} if $\colpivot M i$ cannot be decreased using column additions by scalar multiples of columns $M_j$ with $j<i$.
Equivalently, $\colpivot M i$ is minimal among all pivot indices of linear combinations
\[
\sum_{j\leq i}\lambda_j M_j
\]
with $\lambda_i\neq 0$,
meaning that multiplication from the right by a regular upper triangular matrix $U$ leaves the pivot index of the column unchanged: $\colpivot M i = \colpivot {(MU)} i$.
In particular, a column $M_i$ is reduced if either $M_i = 0$ or all columns $M_j$ with $j<i$ are reduced and satisfy $\colpivot M j \neq \colpivot M i$.
A matrix~$M$ is called \emph{reduced} if all of its columns are reduced.
The following proposition forms the basis of matrix reduction algorithms for computing persistent homology.

\begin{proposition}[\citet{CohenSteiner2006Vines}]
\label{prop: persistence pairs}
Let $D$ be a filtration boundary matrix, and let $V$ be a full rank upper triangular matrix such that $R=D\cdot V$ is reduced.
Then the index persistence pairs are 
\[\{(i,j) \mid i=\colpivot R j \neq 0 \}
,\] 
and the essential indices are 
\[
\{i \mid R_i=0,  i\not\in\pivots R\}.\]
\end{proposition}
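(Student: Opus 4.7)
My plan is to proceed in two stages: first establish a dimension-counting identity that identifies which columns of $R$ correspond to births and which to deaths, and then establish the pairing between death columns and their pivots using the invariance of the pivot structure under the choice of reduction.

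The starting observation is that, since $V$ is upper triangular and full rank, for every $j$ the leading $j \times j$ block of $V$ is invertible; hence $\langle R_1,\dots,R_j\rangle = \langle D_1,\dots,D_j\rangle$, which is exactly $B_{d-1}(K_j)$ (working in the dimension appropriate to each column). Moreover, $R_j = \partial\bigl(\sum_{k\le j} V_{kj}\,\sigma_k\bigr)$ with $V_{jj}\ne 0$. Because $R$ is reduced, its nonzero columns have pairwise distinct pivot rows and are therefore linearly independent, so the number of nonzero $R_k$ with $k\le j$ equals $\dim B_{d-1}(K_j)$. Dualizing via the rank--nullity theorem, $R_j = 0$ precisely when adding $\sigma_j$ enlarges the space of cycles $Z_d$, while $R_j \ne 0$ precisely when it enlarges $B_{d-1}$. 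Hence $\sigma_j$ with $R_j=0$ is a birth simplex contributing a new homology class represented by the cycle $\sum_{k\le j} V_{kj}\sigma_k$, and $\sigma_j$ with $R_j \ne 0$ is a death simplex killing a class of $H_{d-1}$.

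To pair each death column with the correct birth, I would appeal to the pairing uniqueness principle that is standard in this area. The key point is that for any $i\le j$, the rank of the submatrix $R^{\ge i,\,\le j}$ formed by rows with index $\ge i$ and columns with index $\le j$ is invariant under right multiplication by an upper triangular invertible matrix, and hence equals the rank of the corresponding submatrix of $D$. A short bookkeeping argument then shows that the pivot set $\{(i,j) : \colpivot R j = i,\ R_j\ne 0\}$ can be recovered from these submatrix ranks via the difference
\[
 \mathrm{rank}\,R^{\ge i,\,\le j} - \mathrm{rank}\,R^{\ge i+1,\,\le j} - \mathrm{rank}\,R^{\ge i,\,\le j-1} + \mathrm{rank}\,R^{\ge i+1,\,\le j-1},
\]
so the pivot pairs depend only on $D$, not on $V$. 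Combined with the classical identification of these rank differences with persistent Betti number jumps (the inclusion-exclusion expression above counts bars $[i,j)$ in the barcode), this yields $(i,j)$ as an index persistence pair exactly when $i = \colpivot R j \ne 0$. An essential class is then one born at some index $i$ whose column satisfies $R_i = 0$ and whose birth is never subsequently paired with a death, i.e.\ $i\notin\pivots R$.

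The main obstacle is the pairing uniqueness step: without it, one only knows that the number of nonzero columns is correct, not that the specific pivot rows encode the correct births. I would either invoke this invariance lemma directly (as originally done by Cohen-Steiner--Edelsbrunner--Morozov), or give a self-contained proof by double induction on $(i,j)$ using the rank formula above, checking that any column operation performed during reduction leaves each $\mathrm{rank}\,R^{\ge i,\,\le j}$ unchanged.
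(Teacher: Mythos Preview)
The paper does not actually prove this proposition; it is stated with attribution to Cohen-Steiner, Edelsbrunner, and Morozov (the Vines paper) and used as a black box. So there is no in-paper proof to compare against.

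That said, your proposal is essentially the standard argument from that reference, and the structure is sound. The first stage (span invariance under right multiplication by an invertible upper triangular matrix, distinct pivots giving linear independence, hence $R_j=0$ iff $\sigma_j$ creates a cycle) is correct and complete. The second stage is the Pairing Uniqueness Lemma: the ranks of the lower-left submatrices $R^{\ge i,\le j}$ coincide with those of $D^{\ge i,\le j}$, and the inclusion--exclusion formula recovers the pivot positions from these ranks, so the pivots depend only on $D$. This is exactly the argument in the cited paper.

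The one place where your write-up is thin is the phrase ``classical identification of these rank differences with persistent Betti number jumps.'' That identification is the real content of the proposition, and it does not follow formally from anything you have written; you are effectively invoking the result you are trying to prove. To make the argument self-contained you would need to show directly that $\operatorname{rank} D^{\ge i,\le j}$ counts the index pairs $(k,l)$ with $k\ge i$, $l\le j$ in the barcode decomposition of the filtered chain complex (equivalently, relate it to $\dim Z_{d-1}(K_{i-1}) - \dim Z_{d-1}(K_{i-1})\cap B_{d-1}(K_j)$ or a similar expression), or else argue inductively simplex by simplex that the pivot of column $j$ is the youngest cycle that becomes a boundary when $\sigma_j$ is added. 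Either route is standard, but one of them needs to be spelled out rather than asserted.
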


A basis for the filtered chain complex that is compatible with both the filtration and the boundary maps is given by the chains
\[
\{R_j \mid j \text{ is a death index} \}
\cup
\{V_j \mid j \text{ is a death index} \}
\cup
\{V_i \mid i \text{ is an essential index} \}
,
\]
determining a direct sum decomposition of $C_*(K)$ into elementary chain complexes of the form
\[\dots \to 0 \to \langle V_j \rangle \stackrel\partial\to \langle R_j \rangle \to 0 \to \dots\]
for each death index $j$ and
\[\dots \to 0 \to \langle V_i \rangle \to 0 \to \dots\]
for each essential index $i$.
Taking intersections with the filtration $C_*(K_\bullet)$, we obtain elementary filtered chain complexes, in which $R_j$ is a cycle appearing in the filtration at index $i=\colpivot R j$ and becoming a boundary when $V_j$ enters the filtration at index $j$, and in which an essential cycle $V_i$ enters the filtration at index $i$.
The persistent homology is thus generated by the representative cycles
\[\{R_j \mid j \text{ is a death index} \}
\cup
\{V_i \mid i \text{ is an essential index} \},\]
in the sense that, for all indices $k \in I$, the homology $H_*(K_k)$ has a basis generated by the cycles
\[\{R_j \mid (i,j) \text{ is an index persistence pair with } k \in [i,j)  \}
\cup
\{V_i \mid i \text{ is an essential index with } k \in [i,\infty)\},\]
and for all pairs of indices $k,l \in I$ with $k \leq l$, the image of the map in homology $H_*(K_k) \to H_*(K_l)$ induced by inclusion has a basis generated by the cycles
\[\{R_j \mid (i,j) \text{ is an index persistence pair with } k,l \in [i,j)  \}
\cup
\{V_i \mid i \text{ is an essential index with } k \in [i,\infty)\}.\]
An algorithm for computing the matrix reduction $R = D \cdot V$ is given below as \cref{Matrix reduction}.
It can be applied either to the entire filtration boundary matrix in order to compute persistence in all dimensions at once, or to the filtration $d$-boundary matrix, resulting in the persistence pairs of dimensions $(d-1,d)$ and the essential indices of dimension~$d$.
This algorithm appeared for the first time in \cite{CohenSteiner2006Vines}, rephrasing the original algorithm for persistent homology \cite{Edelsbrunner2002Topological} as a matrix algorithm.
An algorithmic advantage to the decomposition algorithm described in \cite{MR1310596} is that it does not require any row operations. 

Note that the column operations involving the matrix $V$ are often omitted if the goal is to compute just the persistence pairs and representative cycles are not required.
In \cref{sec: Implicit filtration}, we will use the matrix~$V$ nevertheless to implicitly represent the matrix $R = D \cdot V$.

\begin{algorithm}[h]
\caption{Matrix reduction and persistence pairs}
\label{Matrix reduction}
\begin{algorithmic}%
\Require \\
$D$: $I \times J$ filtration boundary matrix (with row indices $I$ and column indices $J$)
\Ensure \\
$V$: full rank upper triangular $J \times J$ matrix,
$R = D \cdot V$: reduced matrix, \\
$P$: persistence pairs, 
$E$: essential indices

\smallskip
\hrule
\smallskip

\State{$P:=\emptyset$}
\For{$j \in J$ in increasing order}
\State{$R_j := D_j$}
\State{$V_j := e_j$}
\While{there exist $k < j$
       with $\colpivot R k = \colpivot R j$}
\State{$\lambda := \colpivotentry R j / \colpivotentry R k$ }
\State{$R_j := R_j - \lambda \cdot R_k$}
\Comment{For implicit matrix reduction (\cref{sec: Implicit filtration})}: $R_j := R_j - \lambda \cdot D \cdot V_k$ 
\State{$V_j := V_j - \lambda \cdot V_k$ }

\EndWhile

\If{$(i:=\colpivot R j) \neq 0$}
\State{append $(i,j)$ to $P$}
\Else
\State{append $j$ to $E$}
\EndIf

\EndFor
\State \Return $V$, $R$, $P$, $E$
\end{algorithmic}
\end{algorithm}

Typically, reducing a column at a birth index tends to be significantly more expensive than one with a death index.
This observation can be explained using the time complexity analysis for the matrix reduction algorithm given in \cite[Section VII.2]{Edelsbrunner2010Computational}: the reduction of a column for a $d$-simplex with death index~$j$ and corresponding birth index~$i$ requires at most $(d+1)(j-i)^2$ steps, while the reduction of a column with birth index $i$ requires at most $(d+1)(i-1)^2$ steps.
Typically, the index persistence $(j-i)$ is quite small, while the reduction of birth columns indeed becomes expensive for large birth indices $i$.
The next subsection describes a way to circumvent these birth column reductions whenever possible.

\subsection{Clearing inessential birth columns}
\label{sec: Clearing columns}

An optimization to the matrix reduction algorithm, due to \citet{Chen2011Persistent}, is based on the observation that the computation of persistence pairs according to \cref{prop: persistence pairs} does not involve any column with an inessential birth index.
Reducing those columns to zero is therefore unnecessary, and avoiding their reduction can lead to dramatic improvements in running time.
The \emph{clearing} optimization simply sets a column $R_i$~to~$0$ if $i$ is the pivot index of another column, $i = \colpivot R j$.

As proposed in \cite{Chen2011Persistent}, clearing yields only the reduced matrix~$R$, and the method is described in the survey by \citet{Morozov2017Persistent} as incompatible with the computation of the reduction matrix $V$.
In fact, however, the clearing optimization can actually be extended to also obtain the reduction matrix, which plays a crucial role in our implementation.
To see this, consider an inessential birth index $i = \colpivot R j$, corresponding to a cleared column.
Obtaining the requisite full rank upper triangular reduction matrix~$V$ requires an appropriate column $V_i$ such that $D \cdot V_i = R_i = 0$, i.e., $V_i$ is a cycle.
It suffices to simply take $V_i = R_j$; by construction, this column is a boundary, and since $\colpivot V i = i$, the resulting matrix $V$ will be full rank upper triangular.

Note that when removing columns with birth indices from a filtration $d$-boundary matrix, the remaining columns remain a generating set for the boundaries as required by the definition.
The resulting method is summarized in \cref{Matrix reduction with clearing}.

\begin{algorithm}
\caption{Matrix reduction and persistence pairs with clearing}
\label{Matrix reduction with clearing}
\begin{algorithmic}%
\Require 
\\
$D$: $I \times J$ filtration $d$-boundary matrix  (with row indices $I$ and column indices $J$),\\
$\widetilde R$: reduced filtration $(d+1)$-boundary matrix,
$\widetilde P$: persistence pairs of dimensions $(d,d+1)$
\Ensure 
\\
$V$: full rank upper triangular $J \times J$ matrix, 
$R = D \cdot V$: reduced filtration $d$-boundary matrix, \\
$P$: persistence pairs of dimensions $(d-1,d)$, 
$E$: essential indices of dimension $d$

\smallskip
\hrule
\smallskip

\State{$\widehat J := J \setminus \pivots \widetilde R$} %

\State{$\widehat D := I \times \widehat J$ submatrix of $D$}

\State{Apply \cref{Matrix reduction}
to
reduce $\widehat D$ to $\widehat R = \widehat D \cdot \widehat V$
and obtain the persistence pairs $P$ and the essential indices $E$}

\State{Extend $\widehat R$ to a $I \times J$ matrix $R$ by filling in zeros}

\State{Extend $\widehat V$ to a $J \times J$ matrix $V$ by filling in zeros}

\For{$(i,j) \in \widetilde P$}
\State{$V_i := \widetilde R_j$}
\EndFor
\State \Return $V$, $R$, $P$, $E$
\end{algorithmic}
\end{algorithm}

The birth indices $i$ of persistence pairs $(i,j)$ are identified with the reduction of column $j$.
In order to ensure that this happens before the algorithm arrives at column $i$, so that the column is cleared already before it would get reduced, the matrices for the boundary maps $\partial_d$
are reduced in order of decreasing dimension $d = (p+1), \dots, 1$.
For each index persistence pair $(i,j)$ computed in the reduction of the boundary matrix for $\partial_d$, the corresponding column for index~$i$ can now be removed from the boundary matrix for $\partial_{d-1}$.
Note however that computing persistent homology in dimensions~$0\leq d \leq p$ still requires the reduction of the full boundary matrix $\partial_{p+1}$.
This can become very expensive, especially if there are many $(p+1)$-simplices, as in the case of a Vietoris–Rips filtration.
In this setting, the complex~$K$ is the $(p+1)$-skeleton of the full simplex on $n$~vertices.
The standard matrix reduction algorithm for persistent homology requires the reduction of one column per simplex of dimension $1 \leq d \leq p + 1$, amounting to
\[
\sum_{d=1}^{p+1} \underbrace{\binom{n}{d+1}}_{\dim C_{d}(K)} %
= \sum_{d=1}^{p+1} \underbrace{\binom{n-1}{d}}_{\dim B_{d-1}(K)} + \sum_{d=1}^{p+1} \underbrace{\binom{n-1}{d+1}}_{\dim Z_{d}(K)}
\]
columns in total.
Note that $\dim B_{d-1}(K)$ equals the number of death columns and $\dim Z_{d}(K)$ equals the number of birth columns in the $d$-boundary matrix.
As an example, for $p=2,\,n=192$ we obtain $56\,050\,096$
columns, of which $1\,161\,471$ are death columns and $54\,888\,625$ are birth columns.
Using the clearing optimization, this number is lowered to
\[
\sum_{d=1}^{p+1} \underbrace{\binom{n-1}{d}}_{\dim B_{d-1}(K)}
+ \underbrace{\binom{n-1}{p+2}}_{\dim Z_{p+1}(K)}
=
\sum_{d=1}^{p+2} \binom{n-1}{d}
=
\sum_{d=0}^{p+1} \binom{n-1}{d+1}
\]
columns; again, for $p=2,\,n=192$ this still yields $54\,888\,816$ columns, of which $1\,161\,471$ are death columns and $53\,727\,345$ are birth columns.
Because of the large number of birth columns arising from $(p+1)$-simplices, the use of clearing alone thus does not lead to a substantial improvement yet.

\subsection{Persistent Cohomology}
\label{Cohomology}
The clearing optimization can be used to a much greater effect by computing persistence barcodes using cohomology instead of homology of Vietoris--Rips filtrations.
As noted by \citet{MR2854319}, for a filtration $K_\bullet$ of a simplicial complex $K$ the persistence barcodes for homology $H_*(K_\bullet)$ and cohomology $H^*(K_\bullet)$ coincide, since for coefficients in a field, cohomology is a vector space dual to homology \cite{Munkres1984Elements}, and the barcode of persistent homology (and more generally, of any pointwise finite-dimensional persistence module) is uniquely determined by the ranks of the internal linear maps in the persistence module, which are preserved by vector space duality.

The filtration of chain complexes $C_*(K_\bullet)$ gives rise to a diagram of cochain complexes $C^d(K_\bullet)$, with reversed order on the indexing set.
Since cohomology is a contravariant functor, the morphisms in this  diagrams are however surjections instead of injections.
To obtain a setting that is suitable for our reduction algorithms, we instead consider the filtration of relative cochain complexes $C^d(K,K_\bullet)$.
The \emph{filtration coboundary matrix} for $\delta \colon C^d(K,K_\bullet) \to C^{d+1}(K,K_\bullet)$ is given as the transpose of the filtration boundary matrix with rows and columns ordered in reverse filtration order \cite{MR2854319}.
The persistence barcodes for relative cohomology $H^*(K,K_\bullet)$ uniquely determine those for absolute cohomology $H^*(K_\bullet)$ (and coincide with the respective homology barcodes by duality).
This correspondence can be seen as a consequence of the fact that the short exact sequence of cochain complexes of persistence modules
\[
0 \to C^*(K,K_\bullet) \to C^*(K) \to C^*(K_\bullet) \to 0
\]
(where $C^*(K)$ is interpreted as a complex of constant persistence modules) 
gives rise to a long exact sequence
\[
\cdots \to H^d(K) \to  H^d(K,K_{\bullet}) \stackrel{\delta^*}\to H^{d+1}(K_{\bullet}) \to H^{d+1}(K) \to \cdots 
\]
in cohomology, which can be seen to split at $H^d(K,K_{\bullet})$ and at $H^{d+1}(K_{\bullet})$, with $\op{im}\delta^*$ as the summand corresponding to the bounded intervals in either barcode \cite{bauer2020structure}. 
Now the persistence pairs $(j,i)$ of dimensions $(d,d-1)$ for relative cohomology $H^d(K,K_i)$ correspond to persistence pairs $(i,j)$ of dimensions $(d-1,d)$ for (absolute) homology $H_{d-1}(K_i)$ in one dimension below, i.e., a death index becomes a dual non-essential birth index and vice versa, while the essential birth indices for $H^d(K,K_i)$ remain essential birth indices for $H_d(K_i)$ in the same dimension.
Thus, the persistence barcode can also be computed by matrix reduction of the filtration coboundary matrix.
Note that this is equivalent to row reduction of the filtration boundary matrix, reducing the rows from bottom to top.

Since the coboundary map increases the degree, in order to apply the clearing optimization described in \cref{sec: Clearing columns}, the filtration $d$-coboundary matrices are now reduced in order of increasing dimension using \cref{Matrix reduction with clearing}.
This yields the relative persistence pairs of dimensions $(d+1,d)$ for $H^{d+1}(K,K_i)$, corresponding to the absolute persistence pairs of dimensions $(d,d+1)$ for $H_d(K_i)$, and the essential indices of dimension $d$.
This is the approach used in Ripser.

The crucial advantage of using cohomology to compute the Vietoris–Rips persistence barcodes in dimensions $0 \leq d \leq p$ lies in avoiding the expensive reduction of columns whose birth indices correspond to $(p+1)$-simplices, as discussed in \cref{sec: Clearing columns}.
To illustrate the difference, we first consider cohomology without clearing.
Note that for persistent cohomology, the number of column reductions performed by the standard matrix reduction (\cref{Matrix reduction}) is
\[\sum_{d=0}^{p} \underbrace{\binom{n}{d+1}}_{\dim C^{d}(K)} %
 = \sum_{d=0}^{p} \underbrace{\binom{n-1}{d+1}}_{\dim B^{d+1}(K)}
 + \sum_{d=0}^{p} \underbrace{\binom{n-1}{d}}_{\dim Z^{d}(K)}
;
\]
again, for $K$ the $(p+1)$-skeleton of the full simplex on $n$ vertices with $p=2,\,n=192$, this amounts to $1\,179\,808$ columns, of which $1\,161\,471$ are death columns and $18\,337$ are birth columns.
While this number is significantly smaller than for homology, for small values of $d$ the number of rows of the coboundary matrix, $\binom{n}{d+1}$, is much larger than that of the boundary matrix, $\binom{n}{d}$, and thus the reduction of birth columns becomes prohibitively expensive in practice.
Consequently, reducing the coboundary matrix without clearing has not been observed as more efficient in practice than reducing the boundary matrix \cite{Bauer2017Phat}.
However, in conjunction with the clearing optimization, only
\[
\sum_{d=0}^{p} \underbrace{\binom{n-1}{d+1}}_{\dim B^{d+1}(K)}
  + \underbrace{\binom{n-1}{0}}_{\dim Z^0(K)}
=
\sum_{d=0}^{p+1} \binom{n-1}{d}
\]
columns remain to be reduced; for
$p=2,\,n=192$ we get $1\,161\,472 = 1\,161\,471 + 1$ columns, of which $1\,161\,471$ are death columns and only one is a birth column, corresponding to the single essential class in dimension~$0$.
In addition, typically a large fraction of the death columns will be reduced already from the beginning, as observed in \cref{sec: Experiments}.
Thus, in practice, the combination of clearing and cohomology drastically reduces the number of columns operations in comparison to \cref{Matrix reduction}.

\subsection{Implicit matrix reduction}
\label{sec: Implicit filtration}

The matrix reduction algorithm can be modified slightly to yield a variant in which only the reduction matrix $V$ is represented explicitly in memory.
The columns of the coboundary matrix $D$ are computed on the fly instead, by a method that enumerates the nonzero entries in a given column of $D$ in reverse colexicographic vertex order of the corresponding rows.
Specifically, using the combinatorial number system to index the simplices on the vertex set $\{0,\dots,n-1\}$, the cofacets of a simplex can be enumerated efficiently, as described in more detail in \cref{sec: computing cofacets}.
The matrix $R = D \cdot V$ is now determined implicitly by $D$ and~$V$.
During the execution of the algorithm, only the current column $R_j$ on which additions are performed is kept in memory.
For all previously reduced columns $R_k$, with $k < j$, only the pivot index and its entry, $\colpivot R k$ and $\colpivotentry R k$, are stored in memory.
Whenever needed in the algorithm, those columns are recomputed on the fly as $R_k = D \cdot V_k$ (see \cref{Matrix reduction}).
Note that the extension of clearing to the reduction matrix described in \cref{sec: Clearing columns} is crucial for an efficient implementation of implicit matrix reduction.

To further decrease the memory usage, we apply another minor optimization.
Note that in the matrix reduction algorithm \cref{Matrix reduction}, only a death index $k$  ($R_k = D \cdot V_k \neq 0$) may satisfy the condition $\colpivot R k = \colpivot R j$ in \cref{Matrix reduction}.
Hence, only columns with a death index are used later in the computation to eliminate pivots.
Consequently, our implementation does not actually maintain the entire matrix $V$, but only stores the columns of $V$ with a death index.
In other words, it does not store explicit generating cocycles for persistent cohomology, but only their cobounding cochains.

\subsection{Apparent pairs}

\label{shortcut}

We now discuss a class of persistence pairs that can be identified in a Vietoris–Rips filtration directly from the boundary matrix without reduction, and often even without actually enumerating all cofacets, i.e., without entirely constructing the corresponding columns of the coboundary matrix.
The columns in question are already reduced in the coboundary matrix, and hence remain unaffected by the reduction algorithm.
Most relevant to us are persistence pairs that have persistence zero with respect to the original filtration parameter, meaning that they arise only as an artifact of the lexicographic refinement and do not contribute to the Vietoris–Rips barcode itself.
In practice, most of the pairs arising in the computation of Vietoris–Rips persistence are of this kind (see \cref{sec: Experiments}, and also \cite[Section 4.2]{Zhang2019Hypha}).

\begin{definition}
Consider a simplexwise filtration $K_\bullet$ of a finite simplicial complex $K$.
We call a pair of simplices $(\sigma,\tau)$ of $K$ an
\emph{apparent pair} of $K_\bullet$ if both
\begin{itemize}
\item $\sigma$ is the youngest facet of $\tau$, and
\item $\tau$ is the oldest cofacet of $\sigma$.
\end{itemize}
Equivalently, all entries in the filtration boundary matrix of $K_\bullet$ below or to the left of $(\sigma,\tau)$ are $0$.
\end{definition}
The notion applies to any simplexwise filtration $K_\bullet$, which may arise as a simplexwise refinement of some coarser filtration $F_\bullet$, such as our lexicographic refinement of the Vietoris–Rips filtration.
As an example, consider the Vietoris–Rips filtration on the vertices of a rectangle from \cref{sec: lexicographic refinement}.
The filtration boundary matrices are
\[\small\arraycolsep=0.5ex
\partial_1 =
\begin{pNiceMatrix}[first-row,last-col]
(3,2) & (1,0) & (3,1) & (2,0) & (3,0) &(2,1)\\
1 & 0 & 1 & 0 & 1 & 0 & ~~(3)\\
\mathbf 1 & 0 & 0 & 1 & 0 & 1 & ~~(2)\\
0 & 1 & 1 & 0 & 0 & 1 & ~~(1)\\
0 & \mathbf 1 & 0 & 1 & 1 & 0 & ~~(0)
\end{pNiceMatrix}
\qquad
\partial_2 =
\begin{pNiceMatrix}[first-row,last-col]
(3,2,1) &(3,2,0) & (3,1,0) & (2,1,0)\\
1 & 1 & 0 & 0 & ~~(3,2)\\
0 & 0 & 1 & 1 & ~~(1,0)\\
1 & 0 & 1 & 0 & ~~(3,1)\\
0 & 1 & 0 & 1 & ~~(2,0)\\
0 & \mathbf 1 & 1 & 0 & ~~(3,0)\\
\mathbf 1 & 0 & 0 & 1 & ~~(2,1)
\end{pNiceMatrix}
\qquad
\partial_3 =
\begin{pNiceMatrix}[first-row,last-col]
(3,2,1,0)\\
1& ~~(3,2,1)\\
1& ~~(3,2,0)\\
1& ~~(3,1,0)\\
\mathbf 1& ~~(2,1,0)
\end{pNiceMatrix}
\]
with bold entries corresponding to apparent pairs.
The apparent pairs thus are
\[((0),(1,0)), \, ((2),(3,2)); \, ((2,1),(3,2,1)),\, ((3,0),(3,2,0)); ((2,1,0),(3,2,1,0)).\]
Note that in this example, every zero persistence pair is an apparent pair.

Apparent pairs provide a connection between persistence and discrete Morse theory \cite{Forman1998Morse}.
The collection of all apparent pairs pairs of a simplexwise filtration~$K_\bullet$ will simultaneously constitute a subset of the persistence pairs (\cref{lem: apparent pairs are persistence pairs}) and form a discrete gradient in the sense of discrete Morse theory (\cref{lem: apparent pairs are Morse pairs}).

\paragraph{Apparent pairs as persistence pairs}
As an immediate consequence of the definition of an apparent pair, we get the following lemma.

\begin{lemma}\label{lem: apparent pairs are persistence pairs}
Any apparent pair of a simplexwise filtration is a persistence pair.
\end{lemma}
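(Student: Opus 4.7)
The plan is to show that for any apparent pair $(\sigma,\tau)$ of $K_\bullet$ and any valid reduction $R = D \cdot V$ of the filtration boundary matrix $D$, we obtain $\colpivot R \tau = \sigma$, so that $(\sigma,\tau)$ is a persistence pair by \cref{prop: persistence pairs}. I will argue concretely using the reduction produced by \cref{Matrix reduction}; since pivot indices of a reduced matrix are independent of the choice of upper triangular $V$, this suffices.

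Unpack the hypothesis in matrix terms. The condition that $\sigma$ is the youngest facet of $\tau$ says that in column $D_\tau$, the entry at row $\sigma$ is nonzero and all entries strictly below row $\sigma$ vanish, i.e., $\colpivot D \tau = \sigma$. The condition that $\tau$ is the oldest cofacet of $\sigma$ says that in row $\sigma$, the entry at column $\tau$ is nonzero and $D_{\sigma,k} = 0$ for every $k < \tau$.

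Now trace the reduction of column $\tau$. Initially $R_\tau := D_\tau$, so $\colpivot R \tau = \sigma$. The pivot can only change if there exists $k < \tau$ with $\colpivot R k = \sigma$, in which case a multiple of $R_k$ is added to $R_\tau$ to lower its pivot index. I claim no such $k$ exists. Indeed, since $V$ is upper triangular, for any $k < \tau$ we may write
\[
R_k \;=\; D \cdot V_k \;=\; \sum_{k' \leq k} V_{k',k}\, D_{k'},
\]
and evaluating at row $\sigma$ gives
\[
(R_k)_\sigma \;=\; \sum_{k' \leq k} V_{k',k}\, D_{\sigma,k'} \;=\; 0,
\]
because every $k' \leq k < \tau$ satisfies $D_{\sigma,k'} = 0$ by the "to the left" part of the apparent pair condition. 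Hence row $\sigma$ of $R_k$ vanishes, so $\colpivot R k \neq \sigma$ for all $k < \tau$. Consequently the inner loop of \cref{Matrix reduction} performs no operation that alters the pivot of $R_\tau$, and we conclude $\colpivot R \tau = \sigma \neq 0$. By \cref{prop: persistence pairs}, $(\sigma,\tau)$ is an index persistence pair.

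The argument is essentially immediate once the two bullets of the definition are translated into statements about the column and the row of $D$ indexed by $\tau$ and $\sigma$, respectively; the only mild subtlety is observing that the entire "to the left" row-vanishing condition on $D$ propagates through upper-triangular column operations to give row-vanishing of every $R_k$ with $k < \tau$, which is what blocks any reduction of $\colpivot R \tau$.
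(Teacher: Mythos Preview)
Your proof is correct and follows essentially the same approach as the paper: both translate the apparent pair condition into the vanishing of boundary matrix entries below and to the left of $(\sigma,\tau)$, conclude that the column of $\tau$ is already reduced with pivot $\sigma$, and invoke \cref{prop: persistence pairs}. Your version is simply a more explicit unpacking of the same argument, in particular spelling out via $R_k = D \cdot V_k$ why the row-$\sigma$ vanishing propagates from $D$ to $R$; the paper compresses this into the single observation that the column is ``already reduced from the beginning.''
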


\begin{proof}
Since the entries in the filtration boundary matrix of $K_\bullet$ to the left of an apparent pair $(\sigma,\tau)$ are $0$, the index of $\sigma$ is the pivot of the column of $\tau$ in the filtration boundary matrix.
Thus, the column of $\tau$ in the boundary matrix is already reduced from the beginning, and \cref{prop: persistence pairs} yields the claim.
\end{proof}

\begin{remark}
Note that the property of being an apparent pair does not depend on the choice of the coefficient field.
Indeed, the above statement holds for any choice of coefficient field for (co)homology.
In that sense, the apparent pairs are \emph{universal persistence pairs}.

Specifically, given an apparent pair $(\sigma,\tau)$, the chain complex $\dots \to 0 \to \langle \tau \rangle \to \langle \partial\tau \rangle \to 0 \to \dots$ can easily be seen to yield an indecomposable summand of the filtered chain complex $C_*(K_\bullet; \mathbb Z)$ with integer coefficients (by intersecting the chain complexes), and to generate an interval summand of persistent homology (as a diagram of Abelian groups indexed by a totally ordered set), and thus any other coefficients, by the universal coefficient theorem.
The boundary $\partial\tau$ enters the filtration simultaneously with~$\sigma$, and the appearance of $\sigma$ and $\tau$ in the filtration determine the endpoints of the resulting interval.

Dually, the coboundary $\delta\sigma$ enters the filtration simultaneously with~$\tau$, and the cochain complex 
$\dots \to 0 \to \langle \sigma \rangle \to \langle \delta\sigma \rangle \to 0 \to \dots$ yields an indecomposable summand of $C^*(K_\bullet; \mathbb Z)$, generating an interval summand of $H^*(K_\bullet; \mathbb Z)$ corresponding to the one mentioned above.
\end{remark}

\paragraph{Apparent pairs as gradient pairs}
The next two lemmas relate apparent pairs to discrete Morse theory.
First, we show that apparent pairs are gradient pairs.
\begin{lemma}\label{lem: apparent pairs are Morse pairs}
The apparent pairs of a simplexwise filtration form a discrete gradient.
\end{lemma}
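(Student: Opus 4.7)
The plan is to verify two things: that the apparent pairs together with the unpaired simplices as singletons form a valid discrete vector field on $K$, and that this vector field is the gradient of an explicit discrete Morse function. Writing $i(\sigma)$ for the index of $\sigma$ in the simplexwise filtration $K_\bullet$, both properties reduce to manipulations of ``oldest cofacet'' and ``youngest facet'' inequalities on indices.

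For the partition property I would show that no simplex lies in two distinct apparent pairs. Two same-role collisions are immediate: if $(\sigma, \tau)$ and $(\sigma, \tau')$ are both apparent then $\tau = \tau'$ by uniqueness of the oldest cofacet of $\sigma$, and symmetrically for the upper role. The interesting case is an opposite-role collision, i.e., apparent pairs $(\sigma', \sigma)$ and $(\sigma, \tau)$. Here I would pick any $v \in \tau \setminus \sigma$ and consider the auxiliary simplex $\mu := \sigma' \cup \{v\}$, which lies in $K$ because it is a subset of $\tau$ and $K$ is closed under faces. Then $\mu \neq \sigma$ is simultaneously a cofacet of $\sigma'$ and a facet of $\tau$, so the ``oldest cofacet'' condition in $(\sigma', \sigma)$ yields $i(\sigma) < i(\mu)$ while the ``youngest facet'' condition in $(\sigma, \tau)$ yields $i(\mu) < i(\sigma)$, a contradiction.

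For the Morse structure I would set $f(\sigma) = i(\tau)$ when $(\sigma, \tau)$ is an apparent pair and $f(\sigma) = i(\sigma)$ otherwise, then verify monotonicity by a short case analysis on the pairing status of $\rho$ and $\mu$ for a facet pair $\rho \subsetneq \mu$. The only delicate subcase is $(\rho, \rho')$ apparent with $\rho' \neq \mu$: here the oldest-cofacet property gives $f(\rho) = i(\rho') \leq i(\mu) \leq f(\mu)$, since $\mu$ is another cofacet of $\rho$. The same case analysis confirms that $f(\rho) = f(\mu)$ on a facet pair forces $(\rho, \mu)$ to be an apparent pair. Hence $f$ is a discrete Morse function whose discrete gradient is precisely the collection of apparent pairs. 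The main obstacle is the opposite-role collision in the partition step; the rest is routine bookkeeping, with the key idea being the construction of $\mu = \sigma' \cup \{v\}$ to extract contradictory index comparisons.
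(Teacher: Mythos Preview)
Your proof is correct and follows essentially the same outline as the paper: first rule out a simplex appearing in two apparent pairs via an auxiliary intermediate simplex, then exhibit an explicit discrete Morse function. Your construction $\mu = \sigma' \cup \{v\}$ is exactly the simplex the paper invokes (there called~$\rho$, as a facet of~$\tau$ and cofacet of~$\phi$ different from~$\sigma$); the paper simply asserts its existence rather than writing it down explicitly.

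The one genuine difference is the choice of Morse function. The paper defines $f$ by \emph{lowering} the value on the upper simplex of an apparent pair, setting $f(\sigma_j) = i$ when $(\sigma_i,\sigma_j)$ is apparent and $f(\sigma_j) = j$ otherwise; you instead \emph{raise} the value on the lower simplex, setting $f(\sigma) = i(\tau)$ when $(\sigma,\tau)$ is apparent. These are dual constructions, and both produce valid discrete Morse functions with the apparent pairs as gradient. The paper's choice makes the verification of monotonicity rest on the observation $f(\sigma_k) \leq k$ together with the youngest-facet property, while yours rests on $f(\sigma) \geq i(\sigma)$ together with the oldest-cofacet property; neither is harder than the other, and the bookkeeping is of the same length.
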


\begin{proof}

Let $(\sigma, \tau)$ be an apparent pair, with $\dim \sigma = d$.
By definition, $\tau$ is uniquely determined by $\sigma$, and so $\sigma$~cannot appear in another apparent pair $(\sigma, \psi)$ for any $(d+1)$-simplex $\psi \neq \tau$.
We show that $\sigma$ also does not appear in another apparent pair $(\phi, \sigma)$ for any $(d-1)$-simplex $\phi$.
To see this, 
note that there is another $d$-simplex $\rho \neq \sigma$ that is also a facet of $\tau$ and a cofacet of $\phi$.
Since $\sigma$ is assumed to be the youngest facet of $\tau$,
the simplex $\rho$ is older than~$\sigma$.
In particular, $\sigma$ is not the oldest cofacet of $\phi$, and so $(\phi, \sigma)$ is not an apparent pair.
We conclude that no simplex appears in more than one apparent pair, i.e., the apparent pairs define a discrete vector field.

To show that this discrete vector field is a gradient, let $\sigma_1, \dots, \sigma_m$ be the simplices of $K$ in filtration order, and consider 
the function 
\[f \colon
\sigma_j \mapsto 
\begin{cases}
i & \text{ if there is an apparent pair } (\sigma_i, \sigma_j),\\
j & \text{ otherwise.}
\end{cases}
\]
To verify that $f$ is a discrete Morse function, first note that $f(\sigma_k) \leq k$.
Now let $\sigma_i$ be a facet of~$\sigma_j$.
Then $i < j$.
If $(\sigma_i, \sigma_j)$ is not an apparent pair, we have $f(\sigma_i) \leq i < j = f(\sigma_j) $.
On the other hand, if $(\sigma_i, \sigma_j)$ is an apparent pair, then $\sigma_i$ is the youngest facet of $\sigma_j$, i.e., $k \leq i$ for every facet $\sigma_k$ of $\sigma_j$, and thus $f(\sigma_k) \leq k \leq i = f(\sigma_j)$, with equality holding if and only if $i = k$.
We conclude that $f$ is a discrete Morse function whose sublevel set filtration is refined by $K$ and whose gradient pairs are exactly the apparent pairs of the filtration.
\end{proof}

For the previous example of a simplexwise filtration obtained fron the vertices of a rectangle, the resulting discrete Morse function as constructed in the above proof is shown in the following table.
\[\arraycolsep=0.6ex%
\small
\begin{array}
{c|c|c|c|c|c|c|c|c|c|c|c|c|c|c}
(3) & (2) & (1) & (0) & (3,2) & (1,0) & (3,1) & (2,0) & (3,0) & (2,1) & (3,2,1) & (3,2,0) & (3,1,0) & (2,1,0) & (3,2,1,0)\\
\hline
0 & 1 & 2 & 3& 
1 & 0 & 6 & 7 & 8 & 9 & 
9 & 8 & 12 & 13 & 
13
\end{array}
\]

The following lemma is a partial converse of the above \cref{lem: apparent pairs are Morse pairs}, showing that the gradient pairs of a discrete Morse function form a subset of the apparent pairs.
\begin{lemma}
\label{lem: gradient pairs are zero persistence apparent pairs}
Let $f$ be a discrete Morse function, and let $K_\bullet$ be a simplexwise refinement of the sublevel set filtration $F_\bullet = K_\bullet \circ r$ for $f$.
Then the gradient pairs of $f$ are precisely the  zero persistence apparent pairs of $K_\bullet$.
\end{lemma}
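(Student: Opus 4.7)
The plan is to prove both inclusions of the claimed equality. The translation key throughout is the following: by the remark on zero persistence pairs following the reindexing discussion, a persistence pair $(\sigma,\tau)$ of $K_\bullet$ has zero persistence with respect to $F_\bullet$ if and only if $f(\sigma)=f(\tau)$; and by the definition of a discrete Morse function, a facet pair $\{\sigma,\tau\}$ is a gradient pair of $f$ precisely when $f(\sigma)=f(\tau)$.

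The easier inclusion is to show that a zero persistence apparent pair of $K_\bullet$ is a gradient pair of $f$. Any apparent pair $(\sigma,\tau)$ is by definition a facet pair, and the zero persistence hypothesis supplies $f(\sigma)=f(\tau)$. By the defining property of $f$ as a discrete Morse function, such a facet pair lies in the gradient $V$.

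For the converse, suppose $(\sigma,\tau)$ is a gradient pair of $f$, so $\sigma$ is a facet of $\tau$ and $f(\sigma)=f(\tau)$; the latter already gives zero persistence, so only the two apparent pair conditions in $K_\bullet$ remain to be verified. For the first, let $\sigma' \neq \sigma$ be another facet of $\tau$. Monotonicity gives $f(\sigma') \leq f(\tau)$. Since $V$ is a partition and $\tau$ already appears in the pair $\{\sigma,\tau\}$, the pair $\{\sigma',\tau\}$ cannot lie in $V$, which forces $f(\sigma') \neq f(\tau)$, so in fact $f(\sigma') < f(\sigma)$. Because $K_\bullet$ refines the sublevel set filtration via the reindexing map $r$, a strictly smaller $f$-value places $\sigma'$ strictly earlier than $\sigma$ in the simplexwise order of $K_\bullet$: indeed $\sigma'$ already belongs to $F_{f(\sigma')} = K_{r(f(\sigma'))}$ while $\sigma$ does not. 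Hence $\sigma$ is the youngest facet of $\tau$ in $K_\bullet$. A dual argument, applied to any cofacet $\tau' \neq \tau$ of $\sigma$ and using monotonicity together with the uniqueness of the pairing of $\sigma$ in $V$, yields $f(\tau') > f(\sigma)$ and hence that $\tau$ is the oldest cofacet of $\sigma$.

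The only step that requires some care, and which I would regard as the main obstacle, is the translation from strict inequalities of $f$-values to strict age comparisons in $K_\bullet$. This is a bookkeeping argument about the reindexing map $r$ and the fact that $K_\bullet$ refines $F_\bullet$, which I would spell out as above; the remainder of the proof is essentially an unpacking of the two definitions.
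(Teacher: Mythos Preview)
Your proof is correct and follows essentially the same approach as the paper: both directions are handled by translating between the condition $f(\sigma)=f(\tau)$ and zero persistence, and for the converse you derive the strict inequalities $f(\sigma')<f(\sigma)$ and $f(\tau')>f(\tau)$ from the partition property of the gradient, exactly as the paper does. If anything, your first direction is slightly more careful than the paper's in explicitly noting that apparent pairs are facet pairs before invoking the definition of a discrete Morse function.
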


\begin{proof}
Any $0$-persistence pair $(\sigma,\tau)$ satisfies $f(\sigma)=f(\tau)$ and thus, by definition of a discrete Morse function, forms a gradient pair of $f$.

Conversely, any gradient pair $(\sigma,\tau)$ of $f$ satisfies $f(\sigma)=f(\tau)$ and $f(\rho)<f(\tau)$ for any facet $\rho \neq \sigma$ of $\tau$, and similarly, $f(\upsilon)>f(\tau)$ for any cofacet $\rho \neq \tau$ of $\sigma$.
Thus, in any simplexwise refinement of the sublevel set filtration, $\sigma$ is the youngest facet of $\tau$, and $\tau$ is the oldest cofacet of $\sigma$.
This means that $(\sigma,\tau)$ is an apparent zero persistence pair.
\end{proof}

Note that there might be apparent pairs of nonzero persistence.
In particular, starting with a discrete Morse function $f$ and constructing another discrete Morse function $\tilde f$ from $f$ as in the proof of \cref{lem: apparent pairs are Morse pairs}, the gradient pairs of $f$ form a subset of the gradient pairs of $\tilde f$.
In particular, the nonzero persistence apparent pairs of $f$ will be gradient pairs of $\tilde f$, but not of $f$.

\begin{remark}
The notion of apparent pairs generalize to the setting of algebraic Morse theory \cite{MR2151775,MR2171225,MR2488864} in a straightforward way \cite{lampret2020chain}.
In this setting, one considers a finitely generated free chain complex
of free modules $C_{d}$ over some ring $R$, equipped with a fixed ordered basis $\Sigma_d$, which is assumed to induce a filtration by subcomplexes.
In this setting, the facet and cofacet relations are defined by the more general condition that two basis elements $\sigma \in \Sigma_d$ and $\tau \in \Sigma_{d+1}$ have a nonzero coefficient in the boundary matrix for $\partial_{d+1}$.
In addition, an algebraic apparent pair $(\sigma,\tau)$ is required to satisfy the additional condition that this boundary coefficient is a unit in $R$.
Similarly to the simplicial setting, in this case $\sigma$ cannot appear in another apparent pair $(\phi, \sigma)$ for any facet $\phi$ of $\sigma$.
To see this, note that again there has to be another basis element $\rho \neq \sigma \in \Sigma_d$ that is also a facet of $\tau$ and a cofacet of $\phi$; if $\sigma$ were the only such basis element, then the coefficient of $(\phi, \tau)$ in the matrix of $\partial_{q}\circ\partial_{q+1}$ would be nonzero, which is excluded by the chain complex property.
The above \cref{lem: apparent pairs are persistence pairs,lem: apparent pairs are Morse pairs,lem: gradient pairs are zero persistence apparent pairs} therefore hold in this generalized setting as well.
Note however that in this case, apparent pairs are no longer guaranteed to be universal persistence pairs, as the condition defining the apparent pairs now depends of the choice of coefficients.
\end{remark}

\paragraph{Lexicographic discrete gradients}
The construction proposed by \citet{Kahle2011Random} for a discrete gradient~$V_L$ on a simplicial complex $K$ from a total vertex order can be understood as a special case of the apparent pairs gradient.
The definition of the gradient $V_L$ is as follows.
Consider the vertices $v_1, \dots v_n$ of the simplicial complex $K$ in some fixed total order.
Whenever possible, pair a simplex $\sigma = \{v_{i_d}, \dots v_{i_1}\}$, $i_d > \dots > i_1$, with the simplex $\tau = \{v_{i_d}, \dots v_{i_1},v_{i_0}\}$ for which $i_0 < i_1$ is minimal.
These pairs $(\sigma,\tau)$ form a discrete gradient \cite{Kahle2011Random}, which we call the \emph{lexicographic gradient.}

We illustrate how this gradient $V_L$ can be considered as a special case of our definition of apparent pairs for the \emph{lexicographic filtration} of $K$, where the simplices are ordered by dimension, and simplices of the same dimension are ordered lexicographically according to the chosen vertex order.

\begin{lemma}
\label{lem: Kahle Gradient}
The lexicographic gradient $V_L$ is the apparent pairs gradient of the lexicographic filtration.
\end{lemma}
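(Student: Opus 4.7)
The plan is to unwrap both constructions at the level of sorted vertex tuples and check that they produce the same pairs. In the lexicographic filtration, all facets of a common simplex share a dimension, and likewise for cofacets, so being the youngest facet or the oldest cofacet amounts to being lex-maximal or lex-minimal among tuples of a single dimension. I would analyze these two local choices in turn and then combine them.

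For the youngest facet of $\tau = \{v_{j_d}, \ldots, v_{j_0}\}$ with $j_d > \cdots > j_0$, I would compare the facets $\tau \setminus \{v_{j_k}\}$ pairwise: the decreasing tuples for $k$ and $k'$ with $k < k'$ agree in their first $d - k'$ entries, while at the next position the $k$-tuple has $j_{k'}$ versus the $k'$-tuple's $j_{k'-1} < j_{k'}$, so smaller $k$ yields a lex-larger facet. Hence the youngest facet is at $k = 0$, namely $\tau \setminus \{v_{j_0}\}$, obtained by removing the smallest-indexed vertex of $\tau$. For the oldest cofacet of $\sigma = \{v_{i_d}, \ldots, v_{i_1}\}$, I would compare the cofacets $\sigma \cup \{v_{i_0}\}$ with $i_0 \notin \{i_1, \ldots, i_d\}$ and $\sigma \cup \{v_{i_0}\} \in K$. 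If $i_0 < i_1$, the decreasing tuple is $(i_d, \ldots, i_1, i_0)$, agreeing with $\sigma$'s tuple in its first $d$ entries; any cofacet with $i_0 > i_1$ inserts $i_0$ at an earlier position and becomes lex-larger. Among the cofacets with $i_0 < i_1$, smaller $i_0$ produces a smaller final entry and thus a lex-smaller tuple, so the oldest cofacet, when it exists, is $\sigma \cup \{v_{i_0}\}$ for the minimal $i_0 < i_1$ with $\sigma \cup \{v_{i_0}\} \in K$.

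Combining both observations, I would conclude that $(\sigma, \tau)$ is an apparent pair of the lexicographic filtration if and only if $\tau = \sigma \cup \{v_{i_0}\}$ where $i_0 < \min(\sigma)$ is minimal with $\sigma \cup \{v_{i_0}\} \in K$, which is precisely the rule defining $V_L$. The only delicate point is verifying that when $\sigma$ cannot be extended by any $v_{i_0}$ with $i_0 < \min(\sigma)$ inside $K$, the apparent pair framework also leaves $\sigma$ unpaired as a facet: in that case an oldest cofacet of the form $\sigma \cup \{v_{i_0}\}$ with $i_0 > \min(\sigma)$ may still exist, but its smallest vertex is $\min(\sigma)$, so its youngest facet is $\tau \setminus \{v_{\min(\sigma)}\} \neq \sigma$, violating the apparent pair condition. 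Hence the two constructions yield identical collections of pairs, identifying $V_L$ as the apparent pairs gradient of the lexicographic filtration.
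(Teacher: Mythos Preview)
Your proposal is correct and follows essentially the same approach as the paper: both arguments identify the youngest facet of $\tau$ as the one obtained by deleting the smallest vertex, and the oldest cofacet of $\sigma$ as the one obtained by adjoining the smallest admissible vertex below $\min(\sigma)$, then match this against the rule defining $V_L$. Your version is more detailed---you spell out the pairwise tuple comparisons and explicitly dispose of the edge case where no vertex below $\min(\sigma)$ can be added in $K$---whereas the paper simply asserts the lex-extremality facts and runs the two implications directly; but the underlying argument is the same.
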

\begin{proof}
To see that any pair $(\sigma,\tau)$ in $V_L$ is an apparent pair, observe that $i_0$ is chosen such that $\tau$ is the lexicographically smallest cofacet of $\sigma$.
Moreover, $\sigma$ is clearly the lexicographically largest facet of $\tau$.

Conversely, assume that $(\sigma,\tau)$ is an apparent pair for the lexicographic filtration.
Let $\tau = \{v_{i_d}, \dots, v_{i_1},v_{i_0}\}$.
Then $\sigma = \{v_{i_d}, \dots, v_{i_1}\}$ is the lexicographically largest facet of $\tau$, and we have $i_0 < i_1$.
Moreover, since $\tau$ is the lexicographically smallest cofacet of $\sigma$, the index $i_0$ is minimal among all indices~$i$ such that $\{v_{i_d}, \dots, v_{i_1},v_{i_0}\}$ forms a simplex.
\end{proof}

\paragraph{Vietoris--Rips filtrations}
Having discussed the Morse-theoretic interpretation of apparent pairs, we now illustrate their relevance for the computation of persistence.
Focusing on the lexicographically refined Rips filtration, we first describe the apparent pair of persistence zero in a way that is suitable for computation.

\begin{proposition}
\label{prop: zero apparent pairs}
Let $\sigma, \tau$ be simplices in the lexicographically refined Rips filtration.
Then $(\sigma, \tau)$ is a zero persistence apparent pair if and only if
\begin{itemize}
\item $\tau$ is the lexicographically 
maximal cofacet of $\sigma$ such that $\diam(\tau) = \diam(\sigma)$, and
\item $\sigma$ is the lexicographically 
minimal facet of $\tau$ such that $\diam(\tau) = \diam(\sigma)$.
\end{itemize}
\end{proposition}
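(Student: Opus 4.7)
My plan is to prove both directions of the biconditional by carefully unpacking the definitions of ``oldest'' and ``youngest'' in the lexicographically refined Vietoris--Rips filtration. The key observation is that on simplices of the same dimension, the filtration compares first by diameter and then by reverse colexicographic vertex order. Consequently, among simplices of fixed dimension and fixed diameter, the \emph{youngest} (latest in the filtration) is the one that is \emph{lexicographically minimal} in the colex order, while the \emph{oldest} is \emph{lexicographically maximal}. With that dictionary in hand the rest is bookkeeping.

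For the forward direction, suppose $(\sigma,\tau)$ is a zero persistence apparent pair. By \cref{lem: gradient pairs are zero persistence apparent pairs} and the definition of persistence via the reindexing map, zero persistence means $\diam(\sigma)=\diam(\tau)$, which supplies the diameter clause of both bullets. The facets of $\tau$ all have dimension $\dim\sigma$ and diameter at most $\diam(\tau)$; any facet of strictly smaller diameter is older than $\sigma$, so the condition that $\sigma$ is the \emph{youngest} facet of $\tau$ constrains only those facets with diameter exactly $\diam(\tau)$. Among such facets, youngest (by the reverse colex tie-break) means lexicographically minimal in colex order, which is the first bullet. A completely symmetric argument applied to cofacets of $\sigma$ shows that $\tau$ being the oldest cofacet with matching diameter is equivalent to $\tau$ being lexicographically maximal among cofacets of diameter $\diam(\sigma)$.

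The converse runs the same translation in reverse. Assuming the two bulleted conditions immediately gives $\diam(\sigma)=\diam(\tau)$, hence zero persistence. For the apparent-pair conditions, let $\rho$ be any facet of $\tau$: either $\diam(\rho)<\diam(\tau)$, in which case $\rho$ is older than $\sigma$ on account of diameter alone, or $\diam(\rho)=\diam(\tau)$, in which case the lex-minimality assumption on $\sigma$ together with the reverse colex tie-break places $\sigma$ strictly later in the filtration. Either way $\sigma$ is the youngest facet of $\tau$. The analogous dichotomy, using that diameter only goes up under taking cofacets, establishes that $\tau$ is the oldest cofacet of $\sigma$.

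The main obstacle, and the only place requiring care, is correctly tracking the \emph{reversal} introduced by the tie-breaking rule: ``reverse colex'' means that within a group of simplices of equal dimension and diameter, the colex-smallest simplex appears \emph{last}, so ``youngest'' corresponds to ``lex-minimal'' and ``oldest'' to ``lex-maximal''. Once this orientation is fixed, zero persistence acts exactly as the device that restricts the extremal comparison to the diameter-saturated subset of facets (resp.\ cofacets), and the proposition follows by a direct definition chase.
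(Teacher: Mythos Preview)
Your argument is correct and follows essentially the same route as the paper: translate ``youngest facet'' and ``oldest cofacet'' in the lexicographically refined filtration into the diameter-plus-reverse-colex criterion, noting that facets have diameter at most $\diam(\tau)$ and cofacets at least $\diam(\sigma)$, so the extremality conditions reduce to colex extremality within the equal-diameter slice. Two cosmetic remarks: the appeal to \cref{lem: gradient pairs are zero persistence apparent pairs} is unnecessary (zero persistence is by definition $\diam(\sigma)=\diam(\tau)$), and your ``first bullet'' actually refers to the proposition's second bullet.
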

\begin{proof}
The two conditions hold for any zero persistence apparent pair by definition.
It remains to show that the two conditions also imply that $(\sigma, \tau)$ is an apparent pair.
Recall that in the lexicographically refined Rips filtration, simplices are sorted by diameter, then by dimension, and then in (reverse) lexicographic order.
The
simplex $\tau$
must be the oldest cofacet of $\sigma$ in the filtration order: no cofacet of~$\sigma$ can have a diameter less than $\diam\tau = \diam\sigma$, and among the cofacets of~$\sigma$ with the same diameter as~$\tau$, the lexicographically 
maximal cofacet~$\tau$ is the oldest one by construction of the lexicographic filtration order.
Similarly, %
$\sigma$~%
must be the oldest cofacet of $\sigma$ in the filtration order.
\end{proof}

As it turns out, a large portion of all persistence pairs arising in the persistence computation for Rips filtrations can be found this way, and the savings from not having to enumerate all cofacets are substantial.
The following theorem gives a partial explanation of this observation, for generic finite metric spaces and persistence in dimension~$1$.

To set the stage, note that in a a simplexwise refinement of a Vietoris--Rips filtration, any apparent pair, and more generally, any emergent facet pair $(\sigma,\tau)$ of dimensions $(k,k+1)$ with $k \geq 1$ necessarily has persistence zero, as the diameter of $\tau$ equals the maximal diameter of its facets, which by definition is the diameter of $\sigma$.
The following theorem establishes a partial converse to this fact in dimension $1$ and under a certain genericity assumption on the metric space.

\begin{theorem}
\label{zero pairs in dim 1 are apparent}
Let $(X,d)$ be a finite metric space with distinct pairwise distances,
and let $K_\bullet$ be a simplexwise refinement of the Vietoris--Rips filtration for $X$.
Among the persistence pairs of $K_\bullet$ in dimension $1$, the zero persistence pairs are precisely the apparent pairs.
\end{theorem}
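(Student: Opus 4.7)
My plan is to prove both implications, leveraging that distinct pairwise distances force all edges to have distinct diameters, so any simplexwise refinement $K_\bullet$ orders the $1$-simplices strictly by diameter. This makes the youngest (resp.\ oldest) edge-facet of any $2$-simplex coincide with its longest (resp.\ shortest) edge, and this single observation does most of the work.

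For the easy direction, suppose $(\sigma,\tau)$ is an apparent pair with $\dim\sigma=1$, $\dim\tau=2$. By definition $\sigma$ is the youngest facet of $\tau$, hence the longest edge of $\tau$, which realizes $\diam(\tau)$; so $\diam(\sigma)=\diam(\tau)$ and the pair has zero persistence. Conversely, let $(\sigma,\tau)$ be a zero persistence pair in dimension $1$ with $\diam(\sigma)=\diam(\tau)=:t$. Distinct distances imply $\sigma$ is the unique edge of $\tau$ of length $t$, hence the unique longest and thus youngest facet of $\tau$, giving condition (a) of the apparent pair definition. It remains to verify condition (b), that $\tau$ is the oldest cofacet of $\sigma$.

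I would argue (b) by contradiction. Assume some cofacet $\tau'$ of $\sigma$ is older than $\tau$. Since $\tau'\supseteq\sigma$ gives $\diam(\tau')\geq t$, and being older than $\tau$ (with $\diam(\tau)=t$) gives $\diam(\tau')\leq t$, we obtain $\diam(\tau')=t$ and, by distinct distances, $\sigma$ is the unique longest edge of $\tau'$ as well. Therefore the initial pivot of the boundary column of $\tau'$ is $\sigma$. Since pivots can only decrease during the column reduction of \cref{Matrix reduction}, after reducing $\tau'$ its pivot is either $\sigma$ (making $(\sigma,\tau')$ a persistence pair, contradicting uniqueness of the death index paired with $\sigma$ in \cref{prop: persistence pairs}) or a strictly smaller filtration index (in which case $\sigma$ must have been eliminated by some even earlier column $\tau''$ whose reduced pivot is $\sigma$, so $(\sigma,\tau'')$ is the persistence pair for $\sigma$, again contradicting $(\sigma,\tau)$). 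Both subcases contradict $(\sigma,\tau)$ being a persistence pair, so no such $\tau'$ exists and $(\sigma,\tau)$ is apparent. The main obstacle I anticipate is precisely this reduction analysis: one must take care not to invoke circularly the uniqueness being proved, which is resolved cleanly by the monotone pivot observation splitting the argument into the two exhaustive subcases above.
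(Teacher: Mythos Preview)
Your proof is correct. The easy direction and the verification that $\sigma$ is the youngest facet of $\tau$ match the paper exactly. For the remaining condition~(b), however, the paper takes a more direct route than your contradiction argument: it lets $\psi$ be the oldest cofacet of~$\sigma$, observes (just as you did for $\tau'$) that $\diam(\psi)=\diam(\sigma)$ and hence $\sigma$ is the youngest facet of~$\psi$, so $(\sigma,\psi)$ is an apparent pair outright; then \cref{lem: apparent pairs are persistence pairs} makes $(\sigma,\psi)$ a persistence pair, and uniqueness of the pairing forces $\psi=\tau$. Your argument reaches the same conclusion but through the mechanics of the reduction algorithm (pivot monotonicity and the two subcases), effectively rederiving the content of \cref{lem: apparent pairs are persistence pairs} in this special situation. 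The paper's version is a bit shorter and avoids referring to the algorithm at all; your version, on the other hand, is self-contained in that it does not rely on \cref{lem: apparent pairs are persistence pairs} as a black box.
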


\begin{proof}
As noted above, every apparent pair is a zero persistence pair.
Conversely, let $(\sigma,\tau)$ be a zero persistence pair of dimensions $(1,2)$.
Since the edge diameters are assumed to be distinct, the edge $\sigma$ must be the youngest facet of $\tau$ in $K_\bullet$.
Now let $\psi$ be the oldest cofacet of $\sigma$.
We then have $\diam(\sigma) \leq \diam(\psi) \leq \diam(\tau)$, and since $(\sigma,\tau)$ is a zero persistence pair, this implies $\diam(\sigma) = \diam(\psi) = \diam(\tau)$.
Since pairwise distances are assumed to be distinct, any edge $\rho \subset \psi$, $\rho \neq \sigma$ must satisfy $\diam(\rho) < \diam(\psi)$.
This implies that $\sigma$ has to be the youngest facet of $\psi$.
Hence, $(\sigma,\psi)$ is an apparent pair.
But since $(\sigma,\tau)$ is assumed to be a persistence pair, we conclude with \cref{lem: apparent pairs are persistence pairs} that $\psi = \tau$, and so $(\sigma,\tau)$ is an apparent pair.
\end{proof}

This result implies that every column in the filtration $1$-coboundary matrix that is not amenable to the emergent shortcut described in \cref{shortcut} actually corresponds to a proper interval in the Vietoris--Rips barcode.
In other words, the zero persistence pairs of the simplexwise refinement do not incur any computational cost in the matrix reduction.

\paragraph{Emergent persistence pairs}

The definition of apparent pairs generalizes to the persistence pairs in a simplexwise filtration that become apparent in the reduced matrix during the matrix reduction, and are therefore called \emph{emergent pairs}. They come in two flavors.

\begin{definition}
Consider a simplexwise filtration $(K_i)_{i \in I}$ of a finite simplicial complex $K$.
A persistence pair $(\sigma,\tau)$ of this filtration is an
\emph{emergent facet pair} if $\sigma$ is the youngest facet of $\tau$, 
and an \emph{emergent cofacet pair} if $\tau$ is the oldest cofacet of $\sigma$.
\end{definition}

In other words, $(\sigma,\tau)$ is an emergent facet pair if the column of $\tau$ in the filtration boundary matrix is reduced, and an emergent cofacet pair if the column of $\sigma$ in the coboundary matrix is reduced.
Thus, the columns corresponding to emergent pairs are precisely the ones that are unmodified by the matrix reduction algorithm.

Note that $(\sigma,\tau)$ is an apparent pair if and only if it is both an emergent facet pair and an emergent cofacet pair.
In contrast to the notion of an apparent pair, however, the property of being an emergent pair does depend on the choice of the coefficient field.

\begin{proposition}\label{lem: emergent pairs}
Let $\sigma, \tau$ be simplices in the lexicographically refined Rips filtration.
Then $(\sigma, \tau)$ is a zero persistence emergent cofacet pair if and only if
\begin{itemize}
\item $\tau$ is the lexicographically 
maximal cofacet of $\sigma$ such that $\diam(\tau) = \diam(\sigma)$, and
\item 
$\tau$ does not form a persistence pair $(\rho, \tau)$ with any simplex $\rho$ younger than $\sigma$.
\end{itemize}
A dual statement holds for emergent facet pairs.
\end{proposition}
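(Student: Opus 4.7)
The plan is to prove both directions of the biconditional by combining the concrete meaning of ``oldest cofacet'' in the lexicographically refined Rips filtration with the computational characterization of emergent pairs coming from the cohomology reduction of \cref{Cohomology} and \cref{prop: persistence pairs}. The key preliminary translation is that among the cofacets of $\sigma$ (all of dimension $\dim\sigma+1$ and diameter at least $\diam\sigma$), the oldest one having diameter $\diam\sigma$ is precisely the lexicographically maximal such cofacet, by the reverse colexicographic tie-breaking inside each (diameter, dimension) block. This identifies condition~1 with the conjunction ``$\tau$ is the oldest cofacet of $\sigma$ and $\diam\tau=\diam\sigma$''.

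For the forward direction, the translation above immediately gives condition~1 from the definition of a zero persistence emergent cofacet pair. Condition~2 then follows from the fact that persistence pairing is a partial matching on simplices: since $(\sigma,\tau)$ is already a pair, no $\rho\neq\sigma$ can pair with $\tau$, hence in particular none younger than $\sigma$ does.

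The backward direction is the part I expect to be the main obstacle, since it requires simulating the matrix reduction rather than just reading off definitions. Assuming conditions~1 and~2, the translation again yields that $\tau$ is the oldest cofacet of $\sigma$, so $\tau$ is the unreduced pivot of the column of $\sigma$ in the filtration coboundary matrix. The cohomology variant of \cref{Matrix reduction} processes columns in order of decreasing age, so by the time the column of $\sigma$ is reached the already reduced columns are exactly those corresponding to simplices younger than $\sigma$; by \cref{prop: persistence pairs}, any such column with pivot $\tau$ would witness a persistence pair $(\rho,\tau)$, which condition~2 forbids. Hence no previously reduced column carries pivot $\tau$, the column of $\sigma$ is already reduced from the outset, and $(\sigma,\tau)$ is a persistence pair — combined with the oldest-cofacet property and $\diam\sigma=\diam\tau$, a zero persistence emergent cofacet pair, as required. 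The delicate point to make precise is the bijection between ``previously reduced columns with pivot $\tau$'' and ``persistence pairs $(\rho,\tau)$ with $\rho$ younger than $\sigma$'', which relies on both the processing order of the cohomology reduction and the characterization of persistence pairs via pivots of the fully reduced matrix.
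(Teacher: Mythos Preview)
Your proposal is correct and takes essentially the same route as the paper's proof. The paper is simply terser: it notes that condition~1 makes $\tau$ the oldest cofacet of $\sigma$, then invokes \cref{prop: persistence pairs} directly to conclude that if $\tau$ is not already paired with a younger simplex then $(\sigma,\tau)$ is a persistence pair --- your careful discussion of the correspondence between previously reduced pivots and earlier persistence pairs just makes explicit what the paper leaves implicit.
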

\begin{proof}
Similar to \cref{prop: zero apparent pairs}, both conditions hold for any zero persistence emergent cofacet pair, and it remains to show that the conditions imply that $(\sigma, \tau)$ is an emergent cofacet pair. 
Again, the first condition implies that $\tau$
is the oldest cofacet of $\sigma$ in the filtration order.
Now if $\tau$ is not paired with some younger simplex $\rho$, we conclude from \cref{prop: persistence pairs} that $(\sigma, \tau)$ forms a persistence pair, which is then an emergent cofacet pair.
\end{proof}

\paragraph{Shortcuts for apparent and emergent pairs}
In practice, zero persistence apparent and emergent pairs provide a way to identify persistence pairs $(\sigma, \tau)$ without even enumerating all cofacets of the simplex~$\sigma$, and thus without fully constructing the coboundary matrix for the reduction algorithm.

We first describe how to determine whether a simplex $\sigma$ appears in an apparent pair $(\sigma, \tau)$. 
Following \cref{prop: zero apparent pairs}, enumerate the cofacets of $\sigma$ in reverse lexicographic order until encountering the first cofacet~$\tau$ with the same diameter as $\sigma$. 
Subsequently, enumerate the facets of $\tau$ in forward lexicographic order, until encountering the first facet of $\tau$ with the same diameter as $\tau$. 
Now $(\sigma,\tau)$ is an apparent pair if and only if that facet equals $\sigma$. 
No further cofacets of $\sigma$ need to be enumerated; this is the mentioned shortcut.
This way, we can determine for a given simplex $\sigma$ whether it appears in an apparent pair $(\sigma,\tau)$, and identify the corresponding cofacet $\tau$.
By an analogous strategy, we can also determine whether a simplex $\tau$ appears in an apparent pair $(\sigma,\tau)$, and identify the corresponding facet $\sigma$.

For the emergent pairs, recall that the columns of the coboundary filtration matrix are processed in reverse filtration order, from youngest to oldest simplex.
Assume that $D$ is the filtration $k$-coboundary matrix, corresponding to the coboundary operator $\delta_k \colon C^k(K) \to C^{k+1}(K)$.
Let $\sigma$ be the current simplex whose column is to be reduced, i.e., the columns of the matrix $R = D \cdot V$ corresponding to younger simplices are already reduced, while the current column is still unmodified, containing the coboundary of $\sigma$.
Following \cref{lem: emergent pairs}, enumerate the cofacets of the simplex $\sigma$ in reverse lexicographic order. 
When we encounter the first cofacet~$\tau$ with the same diameter as $\sigma$, we know that $\tau$ is the youngest cofacet of $\sigma$.
Equivalently, the index of
$\tau$ is the pivot index of the column for $\sigma$ in the coboundary matrix.
Up to this point, all persistence pairs
$(\rho, \phi)$ with simplices $\rho$ younger than $\sigma$ have been identified already, since the algorithm processes simplices from youngest to oldest. 
Thus, if $\tau$ has not previously been paired with any simplex, we conclude from \cref{prop: persistence pairs} that $(\sigma, \tau)$ is an emergent cofacet pair.
Again, no further cofacets of $\sigma$ need to be enumerated, shortcutting the construction of the coboundary column for~$\sigma$.

\section{Implementation}
\label{sec: Implementation}

We now discuss the main data structures and the relevant implementation details of Ripser, the C++ implementation of the algorithms discussed in this paper.
The code is licensed under the MIT license and available at \url{ripser.org}.
The development of Ripser started in October 2015, with support for the emergent pairs shortcut added in March 2016, and support for sparse distance matrices added in September 2016.
The first version of Ripser has been released in July 2016.
The version discussed in the present article (v1.2) has been released in February 2020.

\paragraph{Input}
The input to Ripser is a finite metric space $(X,d)$, encoded in a comma (or whitespace, or other non-numerical character) separated list as either a distance matrix (full, lower, or upper triangular part), or as a list of points in some Euclidean space (\lstinline"euclidean_distance_matrix"), from which a distance matrix is constructed.
The data type for distance values and coordinates is a 32 bit floating point number (\lstinline"value_t").
There are two data structures for storing distance matrices: \lstinline"compressed_distance_matrix" is used for dense distance matrices, storing the entries of the lower (or upper) triangular part of the distance matrix in a \lstinline"std::vector", sorted lexicographically by row index, then column index.
The adjacency list data structure \lstinline"sparse_distance_matrix" is used when the persistence barcode is computed only up to a specified threshold, storing only the distances below that threshold.
If no threshold is specified, the minimum enclosing radius 
\[\min_{x \in X} \max_{y\in X} d(x,y)\]
of the input is used as a threshold, as suggested by \citet{HenselmanPetrusek2017Matroids} and implemented in Eirene.
Above that threshold, for any point $x \in X$ minimizing the above formula for the enclosing radius, the Vietoris--Rips complex is a simplicial cone with apex $x$, and so the homology remains trivial afterwards.

\paragraph{Vertices and simplices}
\label{sec:vertices_simplices}
Vertices are identified with natural numbers $\{0, \dots, n-1\}$, where $n$ is the cardinality of the input space.
Simplices are indexed by natural numbers according to the combinatorial number system.
The data type for both is \lstinline"index_t", which is defined as a 64 bit signed integer (\lstinline"int64_t").
The dimension of a simplex is not encoded explicitly, but passed to methods as an extra parameter.

The enumeration of the vertices of a simplex encoded in the combinatorial number system can be performed efficiently in decreasing order of the vertices.
It is based on the following simple observation.%
\begin{lemma}
Let $N$ be the number of a $d$-simplex $\sigma$ in the combinatorial number system with vertex indices $i_d > \dots > i_0$.
Then the largest vertex index of $\sigma$ is
\(i_d = \max \left\{i\in \mathbb N \mid {i \choose d+1} \leq N\right\}.\)
\end{lemma}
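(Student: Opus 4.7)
The plan is to establish the two-sided inequality $\binom{i_d}{d+1} \le N < \binom{i_d+1}{d+1}$. Since the map $i \mapsto \binom{i}{d+1}$ is non-decreasing on $\mathbb{N}$ (with the convention $\binom{i}{k} = 0$ for $i < k$ used throughout the paper), such a sandwich immediately identifies $i_d$ as $\max\{i \in \mathbb{N} \mid \binom{i}{d+1} \le N\}$, which is the desired conclusion.

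The lower bound is immediate from the defining formula $N = \sum_{l=0}^{d} \binom{i_l}{l+1}$: the term $\binom{i_d}{d+1}$ is one of the summands and all summands are non-negative.

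For the upper bound, I would proceed by induction on the dimension $d$. In the base case $d=0$, the formula reduces to $N = \binom{i_0}{1} = i_0$, and indeed $\binom{i_0+1}{1} = i_0 + 1 > N$. For the inductive step, set $N' = \sum_{l=0}^{d-1} \binom{i_l}{l+1}$, so that $N = \binom{i_d}{d+1} + N'$; then $N'$ is precisely the combinatorial number (of order $d$) assigned to the $(d-1)$-simplex with vertex indices $i_{d-1} > \dots > i_0$. By the inductive hypothesis, $N' < \binom{i_{d-1}+1}{d}$, and since the vertex indices are strictly decreasing we have $i_{d-1} + 1 \leq i_d$; monotonicity of $\binom{\cdot}{d}$ then yields $N' < \binom{i_d}{d}$. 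Adding $\binom{i_d}{d+1}$ to both sides and applying Pascal's identity $\binom{i_d}{d+1} + \binom{i_d}{d} = \binom{i_d+1}{d+1}$ completes the induction.

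The argument is short and no step is a real obstacle; the only subtle point is that the inductive step relies on extending $\binom{\cdot}{d}$ to all of $\mathbb{N}$ by zero below $d$, so that edge cases such as $i_0 = 0$ (contributing $\binom{0}{1} = 0$) or a $(d-1)$-simplex whose largest vertex index is itself smaller than $d$ are handled uniformly. Once the sandwich is in place, the identification of $i_d$ with the stated maximum is a one-line consequence of the monotonicity of $i \mapsto \binom{i}{d+1}$.
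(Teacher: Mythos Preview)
Your proof is correct and establishes the same sandwich $\binom{i_d}{d+1} \le N < \binom{i_d+1}{d+1}$ as the paper. The lower bound argument is identical. For the upper bound, however, the paper takes a different route: rather than inducting on $d$ and invoking Pascal's identity, it simply observes that $\sigma$ is one of the $\binom{i_d+1}{d+1}$ $d$-simplices on the vertex set $\{0,\dots,i_d\}$, and that these are indexed in the combinatorial number system by the numbers $0,\dots,\binom{i_d+1}{d+1}-1$, forcing $N < \binom{i_d+1}{d+1}$. The paper's counting argument is shorter but leans on the already-stated bijection property of the combinatorial number system; your inductive argument is a bit longer but is self-contained and in fact could serve as an ingredient in proving that bijection property from scratch.
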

\begin{proof}
First note that ${i_d \choose d+1}$ is a summand of $N=\sum_{l=0}^{d}{i_l \choose {l+1}}$, and so we have\[{i_d \choose d+1} \leq N.\]
For the maximality of $i_d$, note that $\sigma$ is a $d$-simplex on the vertex set $\{0, \dots, i_d\}$. In total, there are ${i_d+1 \choose d+1}$ $d$-simplices on that vertex set, indexed  in the combinatorial number system with the numbers $0,\dots,{i_d+1 \choose d+1}-1$. Thus, we have \[N < {i_d+1 \choose d+1}.
\qedhere\]
\end{proof}
Using this lemma, the largest vertex index of a simplex can be found by performing a binary search, implemented in the method \lstinline"get_max_vertex".
Moreover, the second largest vertex index, $i_{d-1}$, equals the largest vertex index of the simplex with vertex indices $(i_d,\dots,i_0)$, which has number $N - {i_d \choose d+1}$. This way, the numbers of any simplex can be computed by iteratively applying the above lemma. This is implemented in the method \lstinline"get_simplex_vertices".

The binary search for the maximal vertex of a simplex is implemented in \lstinline"get_max_vertex".
The requisite computation of binomial coefficients is done in advance and stored in a lookup table (\lstinline"binomial_coeff_table").

\paragraph{Enumerating cofacets and facets of a simplex}
\label{sec: computing cofacets}
The columns of the coboundary matrix are computed by enumerating the cofacets (implemented in the class \lstinline"simplex_coboundary_enumerator").
There are two implementations, one for sparse and one for dense distance matrices.
For the enumeration of facets there is only one implementation (implemented in the class \lstinline"simplex_coboundary_enumerator"), as every facet of a simplex has to be contained in the filtration by the property that a simplicial complex is closed under the fact relation.

For sparse distance matrices with a distance threshold $t$ (\lstinline"sparse_distance_matrix"), the cofacets of a simplex are obtained by taking the intersection of the neighbor sets for the vertices of the simplex,
\[\bigcap_{x \in \sigma} \{y \in X\mid d(x,y) \leq t\}.\]
More specifically, the distances are stored in an adjacency list data structure, maintaining for every vertex an ordered list of its neighbors together with the corresponding distance, sorted by the indices of the neighbors (\lstinline"std::vector<std::vector<index_diameter_t>> neighbors").
The enumeration of cofacets of a given simplex (\lstinline"ripser<sparse_distance_matrix>::simplex_coboundary_enumerator") searches through the adjacency lists for the vertices of the simplex for common neighbors (in the method (\lstinline"has_next").
Any common neighbor then gives rise to a cofacet of the simplex.

For dense distance matrices, the following straightforward method is used to enumerate the cofacets of a $d$-simplex $\sigma = \{v_{i_{d}},\dots,v_{i_0}\}$ in reverse colexicographic order, represented by their indices in the combinatorial number system.
Enumerating $j = n - 1 , \dots , 0$,
for each $j \notin \{i_{d},\dots, i_0 \}$ there is a unique subindex $k \in \{0,\dots, d\}$ with $i_{k+1} > j > i_{k}$; the corner cases $j > i_{d}$ and $i_0 > j$ are covered by letting $i_{d+1} = n$ and $i_{-1}=-1$.
For the corresponding $k$th cofacet of $\sigma$ we obtain the number
\[(v_{i_{d}}, \dots , v_{i_{k+1}}, v_j, v_{i_k} \dots , v_{i_0}) \mapsto \sum_{l=k+1}^{d}{i_l \choose {l+2}}
+
{j \choose {k+1}}
+ 
\sum_{l=0}^{k}{i_l \choose {l+1}}
.
\]
Thus, the cofacets of $\sigma$ can easily be enumerated in reverse colexicographic vertex order by maintaining and updating the values of the two partial sums appearing in the above equation, starting from the number for the simplex $\sigma$, starting from the value $0$ for the left partial sum (\lstinline"idx_above") and the number for the simplex $\sigma$ for the right partial sum (\lstinline"idx_below").
The coboundary coefficient of the corresponding cofacet is $(-1)^k$.

Returning to the example from \cref{sec: combinatorial numbering system}, the cofacets of the simplex $\{v_5,v_3,v_0\}$ in the full simplex on seven vertices $\{v_0,\dots,v_6\}$ are, in reverse colexicographic vertex order, the simplices with numbers
\begin{align*}
(6,5,3,0) &\textstyle \mapsto 0 + {6 \choose 4} + \left({5 \choose 3} + {3 \choose 2} + {0 \choose 1}\right) = 0 + 15 + (10 + 3 + 0) = 0 + 15 + 13 = 28, \\
(5,4,3,0) &\textstyle \mapsto {5 \choose 4} + {4 \choose 3} + \left({3 \choose 2} + {0 \choose 1}\right) = 5 + 4 + (3 + 0) = 5 + 4 + 3 = 12, \\
(5,3,2,0) &\textstyle \mapsto \left({5 \choose 4} + {3 \choose 3}\right) + {2 \choose 2} + {0 \choose 1} = (5 + 1) + 1 + 0 = 6 + 1 + 0 = 7, \\
(5,3,1,0) &\textstyle \mapsto \left({5 \choose 4} + {3 \choose 3} + {1 \choose 2}\right) + {0 \choose 1} + 0 = (5 + 1 + 0) + 0 + 0 = 6 + 0 + 0 = 6.
\end{align*}

The facets of a $d$-simplex $\sigma = \{v_{i_{d}},\dots,v_{i_0}\}$ can be enumerated in a similar way, this time in forward colexicographic vertex order (implemented in the class \lstinline"simplex_coboundary_enumerator").
Enumerating $k = d + 1 , \dots, 1, 0$ and letting $j=i_k$, for the corresponding $k$th facet of $\sigma$ we obtain the number
\[(v_{i_{d}}, \dots , v_{i_{k+1}}, v_{i_{k-1}}, \dots , v_{i_0}) 
\mapsto 
\sum_{l=k+1}^{d}{i_l \choose {l}}
+ 
\sum_{l=0}^{k-1}{i_l \choose {l+1}}
.
\]
The facets of $\sigma$ can easily be enumerated in (forward) colexicographic vertex order by maintaining and updating the values of the two partial sums appearing in the above equation, starting from the value $0$ for the left partial sum (\lstinline"idx_above") and the number for the simplex $\sigma$ for the right partial sum (\lstinline"idx_below").

\paragraph{Assembling column indices of the filtration coboundary matrix}

The above methods for enumerating cofacets of a simplex are also used to enumerate the $d$-simplices corresponding to the columns in the filtration coboundary matrix, i.e., the essential simplices and the death simplices for relative cohomology (in the method \lstinline"assemble_columns_to_reduce").
A list of the $(d-1)$-simplices is passed as an argument.
Instead of enumerating all cofacets of each $(d-1)$-simplex $\sigma = \{v_{i_{d-1}},\dots,v_{i_0}\}$ in the list of $(d-1)$-simplices, only the cofacets of the form $(v_j, v_{i_{d-1}}, \dots, v_{i_0})$ with $j>i_{d-1}$ are enumerated (using the method \lstinline"has_next" with parameter \lstinline"all_cofacets" set to \lstinline"false"). This way, each $d$-simplex $(v_{i_{d}}, \dots, v_{i_0})$ is enumerated exactly once, as a cofacet of $(v_{i_{d-1}}, \dots, v_{i_0})$.

Since version 1.2, Ripser only assembles column indices that do not correspond to simplices appearing in a zero persistence apparent pair. This strategy, adopted from the parallel GPU implementation of Ripser by \citet{zhang_et_al:LIPIcs:2020:12228}, leads to another significant improvement both in memory usage and in computation time.
The method \lstinline"is_in_zero_apparent_pair") checks whether a simplex has a zero persistence apparent cofacet or facet. Since low-dimensional simplices in Vietoris–Rips filtration more frequently have apparent cofacets than apparent facets, the check for cofacets is carried out first.
To check for a zero persistence apparent cofacet of a simplex (\lstinline"get_zero_apparent_cofacet"), following \cref{prop: zero apparent pairs}, the method \lstinline"get_zero_pivot_cofacet" searches for the lexicographically maximal cofacet with the same diameter as the simplex, and then in turn \lstinline"get_zero_pivot_facet" searches for its lexicographically minimal facet with the same diameter. If that facet is the initial simplex, a zero apparent pair is found.
Checking for an apparent facet (\lstinline"get_apparent_facet") works analogously.

\paragraph{Coefficients}
Ripser supports the computation of persistent homology with coefficients in a prime field~$\mathbb F_p$, for any prime number $p < 2^{16}$. 
The support for coefficients in as prime field can be enabled or disabled by setting a compiler flag (\lstinline"USE_COEFFICIENTS").
The data type for coefficients is \lstinline"coeff_t", which is defined as a 16 bit unsigned integer (\lstinline"uint16_t"), admitting fast multiplication without overflow on 64 bit architectures.
Fast division in modular arithmetic is obtained by precomputing the multiplicative inverses of nonzero elements of the field (in the method \lstinline"multiplicative_inverse_vector").

\paragraph{Column and matrix data sutrctures}
The basic data type for entries in a (\lstinline"diameter_entry_t") boundary or coefficient matrix is a tuple consisting of a simplex index (\lstinline"index_t"), a floating point value (\lstinline"value_t") caching the diameter of the simplex with that index, and a coefficient (\lstinline"coeff_t")
if coefficients are enabled.
The type \lstinline"diameter_entry_t" thus represents a scalar multiple of an oriented $d$-simplex, seen as basis element of the cochain vector space $C^d(K)$.
If support for coefficients is enabled, the index (48 bit) and the coefficient (16 bit) are packed into a single 64 bit word (using \lstinline"__attribute__((packed))").
The actual number of bits used for the coefficients can be adjusted by changing \lstinline"num_coefficient_bits", in order to accommodate a larger number of possible simplex indices.

The reduction matrix $V$ used in the persistence computation is represented as a list of columns in a sparse matrix format (\lstinline"compressed_sparse_matrix"), with each column storing only a collection of nonzero entries, encoding a linear combination of the basis elements for the row space.
The diagonal entries of $V$ are always $1$ and are therefore not stored explicitly in the data structure.
Note that the rows and columns of $V$ are not indexed by the combinatorial number system, but by a prefix of the natural numbers corresponding to a filtration-ordered list of boundary columns (\lstinline"columns_to_reduce").

\paragraph{Pivot extraction}
During the matrix reduction, the current working columns $V_j$ and $R_j$ on which column operations are performed
(\lstinline"working_reduction_column" and \lstinline"working_coboundary")
are maintained as binary heaps (\lstinline"std::priority_queue") with value type \lstinline"diameter_entry_t", using a comparison function object to specify the ordering of the heap elements (\lstinline"greater_diameter_or_smaller_index") in reverse filtration order (of the simplices in the lexicographically refined Rips filtration), thus providing fast access to the pivot entry of a column.

A heap encodes a column vector as a sum of scalar multiples of the row basis elements, the summands being encoded in the data type \lstinline"diameter_entry_t".
The heap may actually contain several entries with the same row index, and should thus be considered as a lazily evaluated representation of a formal linear combination.
In particular, the pivot entry of the column is obtained (in the method \lstinline"pop_pivot") by iteratively extracting the top entries of the heap and summing up their coefficients as long as their row index does not change.
At any point, the coefficient sum might become zero, in which case the procedure continues with the next row index regardless of its row index.
A similar lazy heap data structure has been used already in PHAT \cite{Bauer2017Phat} and DIPHA \cite{Bauer2014Distributed}.

\paragraph{Coboundary matrix}
The method \lstinline"init_coboundary_and_get_pivot" initializes both working columns as $V_j = e_j$ and $R_j = D_j$ and returns the pivot of the column $D_j$.
During the construction of $D_j$, the method checks for a possible emergent pair $(i,j)$ while enumerating the cofacets of the simplex with index $j$.
If the pivot index of $D_j$ is found to form an emergent pair with $j$, the method immediately returns the pivot, without completing the construction of $D_j$.
Since the implicit matrix reduction variant of \cref{Matrix reduction} discards this column afterwards, retaining only the pivot index ($i = \colpivot R j$) and the pivot entry ($\colpivotentry R j$), this shortcut does not affect the correctness of the computation.

\paragraph{Persistence pairs}
The computation of persistence barcodes proceeds by applying \cref{Matrix reduction with clearing} to the filtration coboundary matrix, as described in \cref{Cohomology}.
First, the persistent cohomology in degree $0$ is computed (in \lstinline"compute_dim_0_pairs") using Kruskal's minimum spanning tree algorithm \cite{MR78686} with a 
union-find \cite{MR532171} data structure (\lstinline"union_find").
After that, the remaining barcodes are computed in increasing dimension (\lstinline"compute_dim_0_pairs").

Apparent pairs are not stored in the hash table. Consequently, the check for a column with a given pivot index proceeds in two steps: first querying the hash table, and if no pair is found, checking for an apparent pair (\lstinline"get_apparent_facet").
The method 
\lstinline"add_coboundary"
performs the columns additions
$R_j = R_j - \lambda_j \cdot D \cdot V_k$
and
$V_j = V_j - \lambda_j \cdot V_k$
in \cref{Matrix reduction}.

If the reduction at column index $j$ finishes with a nonzero working coboundary $R_j$ and thus a persistence pair $(i,j)$ is found, the index in the vector \lstinline"columns_to_reduce" corresponding to column~$R_j$ is stored at key~$i$ in the hash table \lstinline"pivot_column_index" (\lstinline"std::unordered_map"), providing fast queries for the column~$j$ with a given pivot index $i = \colpivot R j$, as required in \cref{Matrix reduction}.
The working reduction column $V_j$ is 
written into the compressed reduction matrix, while the working coboundary $R_j$ is discarded.

Since the keys of the hash table are precisely the birth indices of persistence pairs, by the clearing optimization these indices are excluded when assembling the column indices for the coboundary matrix in the next dimension (in the method \lstinline"assemble_columns_to_reduce").
The key type for the hash table is \lstinline"entry_t", and the key in the hash table contains $\colpivot R j$ (in the \lstinline"index_t" member of the type \lstinline"entry_t") as well as $\colpivotentry R j$ (in the \lstinline"coefficient_t" member).
Only the \lstinline"index_t" field is used for hashing and comparing keys.

\section{Experiments}
\label{sec: Experiments}

We compare Ripser (v1.2) to the four most efficient publicly available implementations for the computation of Vietoris--Rips persistence: Dionysus 2 (v2.0.8) \cite{Dionysus}, DIPHA (v2.1.0) \cite{Bauer2014Distributed}, Gudhi (v3.2.0) \cite{GUDHI}, and  Eirene (v1.3.5) \cite{Henselman2016Matroid}.
All results were obtained on a desktop computer with 4 GHz Intel Core i7 processor and 32 GB 1600 MHz DDR3 RAM.
The benchmark is implemented in Docker and can be reproduced using the command \lstinline"docker build github.com/Ripser/ripser-benchmark" on any machine with sufficient memory.
The software DIPHA, written to support parallel and distributed persistence computation, was configured to run on all 4 physical cores.
The data set \emph{sphere3} consists of 192 random points on the unit sphere in $\mathbb R^3$.
It is taken from the benchmark for PHAT~\cite{Bauer2017Phat}.
The data set \emph{o3} consists of 4096 random orthogonal $3 \times 3$ matrices.
For this data set, we computed cohomology up to degree 3 and up to a diameter threshold of $1.4$.
We also used a prefix of the \emph{o3} dataset consisting of 1024 matrices, for which we used the threshold $1.8$.
The data set \emph{torus4} consists of 50000 random points from the Clifford torus $S^1 \times S^1 \subset \mathbb R^4$, for which we used a diameter threshold of $0.15$.
The data sets \emph{dragon}, \emph{fractal-r}, and \emph{random16} are taken from the extensive benchmark \cite{Otter2017Roadmap}.
Our results are shown in \cref{table: benchmark}.

\begin{table}[t]
\centering
\footnotesize
\begin{tabular}{lrrrr@{ }rr@{ }rr@{ }rr@{ }rr@{ }r}
            & $n$ & $p$ & $t$ & \multicolumn{2}{c}{Dionysus}  & \multicolumn{2}{c}{DIPHA}  &  \multicolumn{2}{c}{Gudhi}  & \multicolumn{2}{c}{Eirene}  & \multicolumn{2}{c}{Ripser} \\
\hline
sphere3     &  192& 2 &    & 596\,s,&3.4\,GB        & 26.9\,s,&5.5\,GB     & 46.7\,s,&2.6\,GB     & 11.7\,s,&1.6\,GB     & 0.66\,s,&116\,MB  \\     
o3          & 1024& 3 & 1.8&        &               & 39.1\,s,&1.4\,GB     &  7.6\,s,&610\,MB     &  4.2\,s,&566\,MB     &  2.1\,s,&46.3\,MB \\     
o3          & 4096& 3 & 1.4&        &               &         &            &  344\,s,&17.8\,GB    &  159\,s,&14.9\,GB    & 43.1\,s,&1.1\,GB  \\     
torus4      &50000& 2 &0.15&        &               &         &            &  571\,s,&22.7\,GB    &         &            &  117\,s,&8.0\,GB  \\     
dragon      & 2000& 1 &    &        &               &         &            &         &            & 95.0\,s,&8.0\,GB     &  2.4\,s,&96.7\,MB \\
fractal-r   & 512 & 2 &    &        &               &         &            &         &            & 33.1\,s,&5.6\,GB     &  5.5\,s,&518\,MB  \\
random16    &  50 & 7 &    &        &               &         &            &         &            &  7.6\,s,&1.5 \,GB    &  3.7\,s,&201\,MB
\end{tabular}

\caption{Running times and memory usage of different software packages for various data sets.
The number of points is denoted by $n$, the maximal degree of homology to be computed is denoted by $p$, and the diameter threshold is denoted by $t$.
}
\label{table: benchmark}
\end{table}

\paragraph{Apparent and emergent pairs}

For typical data sets, a large portion of the persistence pairs are apparent or emergent pairs of persistence $0$ and can thus be identified using the shortcuts described in \cref{shortcut}, as shown in \cref{table: pair counts}.
This table shows the counts of various pairs for the data sets in each dimension, starting from $1$.
As predicted by \cref{zero pairs in dim 1 are apparent}, in dimension $1$ every zero pair is an apparent pair and hence also an emergent pair.
However, some non-zero emergent pairs appear as well.
In higher dimensions, there are non-emergent zero persistence pairs.
The speedup obtained by the emergent and apparent pair optimizations is shown in \cref{table: speedup}.

\begin{table}[h]
\centering
\footnotesize
\begin{tabular}{lrrrrrrr}
            & $n$ & $d$ & non-zero pairs & non-emergent & non-shortcut  & non-apparent & total pairs\\
\hline
sphere3     &  192&  1 & 53 & 42 & 53 & 53 & 18\,145   \\
            & & 2 & 1 & 601 & 601 & 3032 & 1\,143\,135   \\
o3          & 1024&  1 & 576 & 451 & 576 & 576  & 32\,167\\     
            &       &  2 & 180 & 2127 & 2149 & 7324 & 325\,428\\     
            &       &  3 & 7 & 8385 & 8385 & 38\,205  & 1\,768\,568\\     
o3          & 4096&  1 &  2466 & 1909& 2466 & 2466 & 230\,051\\     
            &       &  2 &  811 & 15\,734& 15\,829  & 55\,465 & 4\,112\,971 \\     
            &       &  3 &  33 & 123\,031& 123\,035  & 542\,461 & 39\,738\,338\\     
torus4      &50\,000& 1 & 14\,006 & 10\,892& 14\,006 & 14\,006 & 2\,192\,209 \\     
            &       & 2 & 136 & 60\,603& 60\,635 & 263\,403 & 37\,145\,478 \\     
dragon      & 2000& 1 & 576 & 450 & 576 & 576 & 1\,386\,646\\
fractal-r   & 512 & 1 & 438 & 344 & 438 & 438 & 122\,324 \\
            &     & 2 & 659 & 6612 & 6727 & 23\,811 & 18\,979\,158 \\
random16    &  50 & 1 &  39 & 23 & 39 & 39 & 893 \\
            &     & 2 &  16 & 72 & 74 & 227 & 8746 \\
            &     & 3 &  5  & 168 & 169 & 1022 & 54\,026 \\
            &     & 4 &  3  & 297 & 297 & 3455 & 230\,131 \\
            &     & 5 &  0  & 449 & 449 & 8593 & 714\,525 \\
            &     & 6 &  0  & 576 & 576 & 15\,513 & 1\,677\,037 \\
            &     & 7 &  0  & 620 & 620 & 20\,353 & 3\,049\,075
\end{tabular}
\caption{Counts of different types of persistence pairs for the data sets of \cref{table: benchmark}. 
A~\emph{zero} pair has persistence~$0$ in the Vietoris--Rips filtration.
An \emph{emergent} pair corresponds to a column that is already reduced  in the initial boundary matrix.
A \emph{shortcut} pair is both a zero pair and an emergent pair (in particular, also an \emph{apparent} pair).
}
\label{table: pair counts}
\end{table}

\paragraph{Implicit reduction matrix}

Implicit matrix reduction (\cref{sec: Implicit filtration}) is a prerequisite for discarding the columns of the reduced matrix $R = D \cdot V$ instead of storing them in memory.
This, in turn, is a prerequisite for the emergent pairs shortcut, which does not even fully construct all columns of the boundary matrix.
Finally, the apparent pairs shortcut further avoids storing apparent pairs in the pivot hash table and the list of columns to be reduced.
\Cref{table: speedup} exemplifies the speedup obtained by these optimizations.
The running times shown in this table are obtained by making small modification to Ripser to disable the optimizations in the order of their dependencies.

Given the similar timings of implicit reduction (with \emph{storing} the reduced matrix) and of explicit reduction (also \emph{using} the reduced matrix for columns additions), we observe that the extra cost of recreating the reduced columns, as required for implicit reduction, is actually negligible in practice. 

\begin{table}[h]
\centering
\footnotesize
\begin{tabular}{lrrr@{ }rr@{ }rr@{ }rr@{ }rr@{ }r}
            & $n$ & $p$ & \multicolumn{2}{c}{explicit} & \multicolumn{2}{c}{implicit} & \multicolumn{2}{c}{discarded} & \multicolumn{2}{c}{emergent} & \multicolumn{2}{c}{apparent} \\
\hline
sphere3     &    192& 2 & 15.3\,s,&4.1\,GB & 15.6\,s,&4.1\,GB & 5.6\,s,&196\,MB & 1.0\,s,&196\,MB & 0.66\,s,&116\,MB  \\ 
o3          & 1\,024& 3 & 6.9\,s,&1.2\,GB &  7.2\,s,&1.2\,GB & 5.3\,s,&175\,MB & 2.4\,s,&174\,MB &  2.1\,s,&46.3\,MB  \\
o3          & 4\,096& 3 & & & & & 177\,s,&3.7\,GB&  67.1\,s,&3.7\,GB  &  43.9\,s,&1.1\,GB  \\
torus4      &50\,000& 2 & & & & & 232\,s,&8.2\,GB&  140\,s,&8.2\,GB  &  126\,s,&8.0\,GB  \\
dragon      & 2\,000& 1 & & & & & 40.3\,s,&252\,MB & 2.9\,s,&202\,MB  & 2.4\,s,&96.7\,MB  \\
fractal-r   &   512 & 2 & & & & & 191\,s,&1.76\,GB & 13.6\,s,&1.7\,GB  & 5.5\,s,&518\,MB  \\
random16    &    50 & 7 & 17.6\,s,&1.3\,GB& 16.9\,s,&1.3\,GB& 14.0\,s,&331\,MB &  6.2\,s,&331\,MB &  3.8\,s,&201\,MB
\end{tabular}
\caption{Comparison of running times for different optimization enabled in Ripser:
explicit matrix reduction, implicit matrix reduction storing the reduced matrix $R$, implicit matrix reduction discarding the reduced matrix $R$, and implicit matrix reduction discarding the reduced matrix, employing the emergent pairs shortcut during reduction, and skipping all apparent pairs. All variants also employ clearing and compute cohomology.
}
\label{table: speedup}
\end{table}

\section*{Acknowledgements}
The author thanks Gregory Henselman-Petrusek for helpful discussions and for pointing out the use of the smallest enclosing radius as a homology threshold, and Michael Lesnick for invaluable and extensive feedback on an early draft of this article.
The author has no conflicts of interest.
This research has been supported by the DFG Collaborative Research Center SFB/TRR 109 ``Discretization in Geometry and Dynamics''.

\bibliographystyle{abbrvnaturl}
\bibliography{\jobname}

\end{document}